\apptocmd{\thebibliography}{}{}{}
\numberwithin{equation}{section}
\theoremstyle{plain}
\newtheorem{thm}{Theorem}[section]
\newtheorem{rem}{Remark}[section]
\newtheorem{lem}{Lemma}[section]
\newtheorem{deff}{Definition}[section]
\newcommand{\dE}{\mathbb{E}}
\newcommand{\dR}{\mathbb{R}}
\newcommand{\dL}{\mathbb{L}}
\newcommand{\dC}{\mathbb{C}}
\newcommand{\cN}{\mathcal{N}}
\newcommand{\rI}{\mathrm{I}}
\newcommand{\cF}{\mathcal{F}}
\newcommand{\cR}{\mathcal{R}}
\newcommand{\ind}{\mbox{1}\kern-.25em \mbox{I}}
\font\calcal=cmsy10 scaled\magstep1
\def\build#1_#2^#3{\mathrel{\mathop{\kern 0pt#1}\limits_{#2}^{#3}}}
\def\liml{\build{\longrightarrow}_{}^{{\mbox{\calcal L}}}}
\def\videbox{\mathbin{\vbox{\hrule\hbox{\vrule height1.4ex \kern.6em\vrule height1.4ex}\hrule}}}
\def\demend{\hfill $\videbox$\\}
\email{bernard.bercu@math.u-bordeaux.fr}
\keywords{Elephant random walk; Martingales; Strong law of large numbers;
Asymptotic normality}
\subjclass[2010]{Primary:  60G50; Secondary: 60G42; 60F05}
\begin{document}
\title[On the elephant random walk with stops]
{On the elephant random walk with stops playing hide and seek with the Mittag-Leffler distribution\vspace{1ex}}
\author{Bernard Bercu}
\dedicatory{\normalsize University of Bordeaux, France}
\address{Universit\'e de Bordeaux, Institut de Math\'ematiques de Bordeaux,
UMR CNRS 5251, 351 Cours de la Lib\'eration, 33405 Talence cedex, France.}
\thanks{}

\begin{abstract}
The aim of this paper is to investigate the asymptotic
behavior of the so-called elephant random walk with stops (ERWS). 
In contrast with the standard elephant random walk, the elephant is allowed to be lazy 
by staying on his own position. We prove that the number of ones of the ERWS, properly
normalized, converges almost surely to a Mittag-Leffler distribution.
It allows us to carry out a sharp analysis on the asymptotic behavior of the ERWS.
In the diffusive and critical regimes, we establish the almost sure convergence
of the ERWS. We also show that it is necessary to self-normalized the position
of the ERWS by the random number of ones in order to prove the asymptotic normality.
In the superdiffusive regime, we establish the almost sure convergence of
the ERWS, properly normalized, to a nondegenerate random variable.
Moreover, we also show that the fluctuation of the ERWS around its limiting random variable is 
still Gaussian.
\end{abstract}
\maketitle


\ \vspace{-5ex}
\section{Introduction}
\label{S-I}

Discrete-time random walks with long-memory arose naturally in mathematics and statistical physics. One of them is the famous elephant random walk (ERW) which was introduced in the early 2000s by 
Sch\"utz and Trimper \cite{Schutz2004} in order to investigate how long-range memory affects its asymptotic behavior. The ERW shows three different regimes depending on the location of its memory parameter $p\in [0,1]$ and is defined as follows.
The elephant starts at the origin at time zero, $X_0=0$. For the first step, $X_1$ 
has a Rademacher $\cR(s)$ distribution which means that the elephant moves to 
the right at point $1$ with probability $s$ or to the left at point $-1$ with probability $1-s$ 
where $s$ lies in $[0,1]$. Then, the elephant chooses uniformly at random 
an integer $k$ among the previous times $1,\ldots,n$ and it moves exactly in the same direction as that of time $k$ with probability $p$ 
or in the opposite direction with probability $1-p$. In other words,
\begin{equation}
\label{STEPSERW}
   X_{n+1} = \left \{ \begin{array}{ccc}
    +X_{k} &\text{ with probability } & p, \vspace{2ex}\\
    -X_{k} &\text{ with probability } & 1-p.
   \end{array} \right.
\end{equation}
Over the last decade, the ERW has received a growing attention in mathematics and statistical physics in the diffusive regime $p< 3/4$ and the critical regime $p=3/4$, see \cite{Baur2016, Coletti2017, Colettib2017}, as well as in the superdiffusive regime $p>3/4$, see \cite{Bercu2018, Kubota2019}. We also refer the reader to 
the extension to the multi-dimensional ERW \cite{BercuLaulin2019, Bercu2020, Bertenghi2020, Gonzalez2020}
as well as to the recent contributions \cite{Bertoinb2021, Bertoin2021, Businger2018, Coletti2019, Fan2020, Takei2020, Vazquez2019}. 
\ \vspace{2ex}\\
Surprisingly, to the best of our knowledge, very few results are available on the elephant random walk with stops (ERWS) except Cressoni et al. \cite{Cressoni2013} dealing with the calculation of 
the moments of the ERWS, Gut and Stadtm\"uller \cite{Gut2021} concerning the asymptotic behavior of the number of zeros of the ERWS, and Bercu et al. \cite{Bercu2019} for new hypergeometric 
identities arising from the ERWS. The ERWS is defined
as follows. As previously seen, the elephant starts at the origin at time zero $X_0=0$
and $X_1$ has a Rademacher $\cR(s)$ distribution where $s$ lies in $[0,1]$.
Then, the elephant chooses uniformly at random 
an integer $k$ among the previous times $1,\ldots,n$ and $X_{n+1}$ is determined
stochastically by
\begin{equation}
\label{STEPSERWS}
   X_{n+1} = \left \{ \begin{array}{ccc}
    +X_{k} &\text{ with probability } & p, \vspace{1ex}\\
    -X_{k} &\text{ with probability } & q, \vspace{1ex}\\
    0 &\text{ with probability } & r,
   \end{array} \right.
\end{equation}
where $p+q+r=1$. Throughout the paper, we assume that $0<r<1$ inasmuch as 
the case $r=0$ corresponds to the standard ERW which was previously investigated while in the case
$r=1$, the ERWS remains stuck at zero after the first step.
The position of the ERWS is given by $S_0=0$ and for all $n \geq 1$,
\begin{equation}
\label{POSERWS}
S_{n}=\sum_{k=1}^nX_{k}.
\end{equation}
The motivation for studying the ERWS is twofold. On the one hand, we shall show how the ERWS
plays hide and seek with the Mittag-Leffler distribution. On the other hand, we shall make use of non-standard results on martingales 
in order to overcome the fact that the random number of ones of 
the ERWS has a prominent role in the analysis of its asymptotic behavior, which has not been taken 
into account in \cite{Gonzalez2021, Gut2019}.
More precisely, denote by
$\Sigma_n$ the number of ones of the ERWS up to time $n$,
\vspace{-1ex}
\begin{equation}
\label{ONES}
\Sigma_{n}=
\sum_{k=1}^n X_{k}^2.
\end{equation}
We shall improve Theorem 3.1 in \cite{Gut2021} by showing that, whatever the values of the parameters $p$, $q$ and $r$
in $[0,1]$,
\begin{equation}
\label{ASCVG1ML}
\lim_{n \rightarrow \infty} \frac{1}{n^{1-r}}\Sigma_n = \frac{1}{\Gamma(2-r)}\Sigma \hspace{1cm}\text{a.s.}
\end{equation}
where $\Sigma$ stands for a Mittag-Leffler distribution with parameter $1-r$ and $\Gamma$ is the
Euler Gamma function. One can observe that \eqref{ASCVG1ML} also holds for the ERW, as in the case $r=0$, we clearly have $\Sigma_n=n$ and the  Mittag-Leffler distribution with parameter  $1$ reduces 
to $\Sigma=1$.
The almost sure convergence \eqref{ASCVG1ML} will allow us to carry out
a sharp analysis on the asymptotic behavior of the ERWS. 
\newpage
\noindent
We shall see that the ERWS shows three different regimes depending on the location 
of the memory parameter
\begin{equation}
\label{MEMPAR}
p_r=\frac{p}{1-r}.
\end{equation}
The ERWS is said to be diffusive if $p_r<3/4$, critical if $p_r = 3/4$ and
superdiffusive if $3/4 < p_r$.
The paper is organized as follows. Section \ref{S-ML} deals with the almost sure convergence to the Mittag-Leffler distribution while
Section \ref{S-MR} is devoted to the main results of the paper. We establish the almost sure asymptotic behavior of the
ERWS in the diffusive, critical and superdiffusive regimes. Moreover, we also prove the asymptotic normality of the ERWS, suitably normalized by
$\Sigma_n$, in the diffusive and critical regimes. Finally, the fluctuation of the ERWS around its limiting random variable is also provided
in the superdiffusive regime. 
Two keystone martingales are analyzed in Section \ref{S-MA}. All technical proofs are
postponed to Appendices A to C.

\section{On the Mittag-Leffler distribution}
\label{S-ML}

The Mittag-Leffler function was introduced at the beginning of the last century and was widely studied by various mathematicians.
It is defined, for all $z \in \dC$, by
$$
E_\alpha(z) = \sum_{n=0}^\infty \frac{z^n}{\Gamma(1+n\alpha )}
$$
where $\alpha$ is a positive real parameter and $\Gamma$ stands for the Euler Gamma function. One can observe that $E_1(z)=\exp(z)$ while
$E_2(-z^2)=\cos(z)$.

\begin{deff}
We shall say that a positive random variable $X$ has a Mittag-Leffler distribution with parameter $\alpha \in [0,1]$ if its Laplace transform is given, 
for all $t\in \dR$, by
$$
\dE[\exp(tX)]=E_\alpha(t) = \sum_{n=0}^\infty \frac{t^n}{\Gamma(1+n\alpha )}.
$$
Consequently, for any integer $m \geq 1$, 
\begin{equation}
\label{DEFMLMOM}
\dE[X^m]=\frac{m!}{\Gamma(1+m \alpha )}.
\end{equation}
\end{deff}
\noindent
The Mittag-Leffler distribution satisfies the famous Carleman's condition which means that it is 
characterized by its moments. 
If $\alpha=0$, then $X$ has an exponential distribution with parameter $1$, while if $\alpha=1$, 
$X$ is concentrated on the value $1$.
If $0<\alpha<1$, the probability density function of $X$ was explicitely calculated by 
Pollard \cite{Pollard1948}, see also Feller \cite{Feller1971}. It is given by
$$
f_\alpha(x)=\frac{1}{\pi \alpha} \sum_{n=0}^\infty \Gamma(1+\alpha n)\sin(\alpha n \pi) \frac{(-x)^{n-1}}{n!}\rI_{\{x>0\}}.
$$
As a special case,
$$
f_{1/2}(x)=\frac{1}{\sqrt{\pi}} \exp\Big(\! -\frac{x^2}{4}\Big)\rI_{\{x>0\}}.
$$ 
It means that the Mittag-Leffler distribution with parameter $\alpha=1/2$ coincides with the distribution of 
$|Z|$ where $Z$ has a Gaussian $\cN(0,2)$ distribution. We also refer the reader to Janson \cite{Janson2006} where 
the Mittag-Leffler distribution appears as the asymptotic distribution 
of the composition of a generalized P\'olya urn with two colors. We are now in position 
to formulate our first result.

\begin{lem}
\label{L-ML}
Whatever the values of $p$, $q$ and $r$ in $[0,1]$, we have the almost sure convergence
\begin{equation}
\label{ASCVGML}
\lim_{n \rightarrow \infty} \frac{1}{n^{1-r}}\Sigma_n = 
\Sigma \hspace{1cm}\text{a.s.}
\end{equation}
where $\Sigma$ has a Mittag-Leffler distribution with parameter $1-r$. 
Consequently, $\Sigma$ is positive with probability one.
Moreover, this convergence holds in $\dL^m$ for any integer $m\geq 1$. Hence, for any integer 
$m\geq 1$,
\begin{equation}
\label{MLMOM}
\dE[\Sigma^m]=\frac{m!}{\Gamma(1+m(1-r))}.
\end{equation}
\end{lem}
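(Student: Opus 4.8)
The plan is to treat $\Sigma_n$ as a one-dimensional Markovian counting process and to extract a positive martingale from it. Let $\cF_n = \sigma(X_1, \ldots, X_n)$. Conditionally on $\cF_n$ the index $k$ is uniform on $\{1, \ldots, n\}$, and $\{X_{n+1}^2 = 1\}$ occurs exactly when the $\pm X_k$ branch is chosen (probability $1-r$) and $X_k \neq 0$, so that
$$
\dE[X_{n+1}^2 \mid \cF_n] = (1-r)\frac{1}{n}\sum_{k=1}^n X_k^2 = (1-r)\frac{\Sigma_n}{n}.
$$
Since $\Sigma_{n+1} = \Sigma_n + X_{n+1}^2$, this yields the affine recursion $\dE[\Sigma_{n+1}\mid\cF_n] = \Sigma_n\bigl(1 + (1-r)/n\bigr)$. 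First I would set $a_n = \Gamma(n)/\Gamma(n+1-r)$ and define $M_n = a_n \Sigma_n$; the choice of $a_n$ is forced by $a_{n+1}/a_n = n/(n+1-r)$, which makes $(M_n)$ a nonnegative $\cF_n$-martingale with $M_1 = 1/\Gamma(2-r)$.

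As a nonnegative martingale, $(M_n)$ converges almost surely to an integrable limit $\Sigma \ge 0$. Since the Gamma-ratio asymptotics give $a_n = \Gamma(n)/\Gamma(n+1-r) \sim n^{-(1-r)}$, this immediately delivers the almost sure convergence \eqref{ASCVGML}, namely $n^{-(1-r)}\Sigma_n \to \Sigma$. It then remains to identify the law of $\Sigma$ and to upgrade the convergence to $\dL^m$.

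For the moments, rather than working with $\Sigma_n^m$ directly I would use the rising factorial $R_n^{(m)} = \Sigma_n(\Sigma_n+1)\cdots(\Sigma_n+m-1)$, because it linearizes the increment. Using $\Sigma_{n+1} \in \{\Sigma_n, \Sigma_n+1\}$ with the conditional probability above together with the identity $(\Sigma_n+1)\cdots(\Sigma_n+m) = \frac{\Sigma_n+m}{\Sigma_n}R_n^{(m)}$, a short computation collapses the cross terms to
$$
\dE\bigl[R_{n+1}^{(m)}\mid\cF_n\bigr] = R_n^{(m)}\Bigl(1 + \frac{m(1-r)}{n}\Bigr).
$$
Taking expectations, telescoping, and using $R_1^{(m)} = m!$ gives the closed form $\dE[R_n^{(m)}] = m!\,\Gamma(n+m(1-r))/(\Gamma(1+m(1-r))\Gamma(n))$, hence $\dE[R_n^{(m)}] \sim \frac{m!}{\Gamma(1+m(1-r))}\, n^{m(1-r)}$. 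As $R_n^{(m)} = \Sigma_n^m + O(\Sigma_n^{m-1})$ and $\Sigma_n \to \infty$, the leading term dominates and $\dE[\Sigma_n^m] \sim \frac{m!}{\Gamma(1+m(1-r))}\,n^{m(1-r)}$ as well.

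Consequently $\dE[M_n^m] = a_n^m\,\dE[\Sigma_n^m] \to m!/\Gamma(1+m(1-r))$, so $(M_n)$ is bounded in every $\dL^m$; boundedness in $\dL^{m+1}$ makes $\{M_n^m\}$ uniformly integrable, which promotes the almost sure convergence to convergence in $\dL^m$ and yields $\dE[\Sigma^m] = m!/\Gamma(1+m(1-r))$, that is \eqref{MLMOM}. These are exactly the Mittag-Leffler moments, and since that distribution meets Carleman's condition and is therefore determined by its moments (as recalled above), $\Sigma$ must carry the Mittag-Leffler law with parameter $1-r$; its known support then forces $\Sigma > 0$ almost surely. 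I expect the genuine obstacle to lie in this moment step: choosing the right functional $R_n^{(m)}$ so that the recursion closes, controlling the lower-order remainder well enough to pin down the exact constant $\frac{m!}{\Gamma(1+m(1-r))}$, and securing sufficient uniform integrability to turn moment asymptotics into identification of the limiting distribution. The almost sure convergence itself is comparatively soft.
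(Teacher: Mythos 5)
Your proposal is correct and takes essentially the same route as the paper: the same martingale $a_n\Sigma_n$ (the paper's $N_n=b_n\Sigma_n$ up to the constant $\Gamma(2-r)$), the same rising-factorial moment recursion $\dE[R_{n+1}^{(m)}\mid\cF_n]=\bigl(1+\tfrac{m(1-r)}{n}\bigr)R_n^{(m)}$ with the same closed form $m!\,\Gamma(n+m(1-r))/(\Gamma(1+m(1-r))\Gamma(n))$, and the same identification of the limit law through its moments and Carleman's condition. The only differences are cosmetic: you obtain the recursion directly from the binary increment $\Sigma_{n+1}\in\{\Sigma_n,\Sigma_n+1\}$ (a slightly slicker computation than the paper's binomial-type identity for Pochhammer symbols plus telescoping), and you pass from Pochhammer to ordinary moments by domination, $\Sigma_n^m\leq R_n^{(m)}$ with induction on $m$, where the paper uses the Stirling-number expansion with $b_n\to 0$ --- both of which implicitly use $r<1$, as the paper itself assumes throughout.
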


\begin{proof}
The proof is given in Section \ref{S-MA}. 
\end{proof}

\begin{rem}
On the one hand, the almost sure convergence \eqref{ASCVGML} was already established in 
Theorem 3.1 of \cite{Gut2021} as well as the convergence in $\dL^1$. One can notice a slight difference in the
value of $\dE[\Sigma]$ due to the different first step $X_1$. Moreover, the distribution of $\Sigma$ has not been identified in \cite{Gut2021}. 
A crucial point in all the sequel is that $\Sigma$ is almost surely positive. On the other hand, our proof is totally different from that of Theorem 1  in \cite{Takei2020}. More precisely, it relies on the calculation of the Pochhammer moments of the random variable 
$\Sigma_n$ together with a nice identity that links the Pochhammer and the classical moments of
$\Sigma_n$ via the unsigned Stirling numbers of the first kind.
\end{rem}


\ \vspace{-2ex}
\section{Main results}
\label{S-MR}


\subsection{The diffusive regime}

Our first result deals with the almost sure convergence of the ERWS
in the diffusive regime where $p_r<3/4$.

\begin{thm}
\label{T-ASCVG-DR}
We have the almost sure convergence
\begin{equation}
\label{ASCVGDR}
\lim_{n \rightarrow \infty} \frac{S_n}{n} = 0 \hspace{1cm}\text{a.s.}
\end{equation}
\end{thm}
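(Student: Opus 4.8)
The plan is to read \eqref{ASCVGDR} off directly from Lemma~\ref{L-ML}, using the elementary fact that the position of the ERWS is dominated by its number of ones. Since every increment $X_k$ takes its values in $\{-1,0,1\}$, we have $|X_k|=X_k^2$, so the triangle inequality gives, for all $n\geq 1$,
\begin{equation*}
|S_n|=\Big|\sum_{k=1}^n X_k\Big|\leq \sum_{k=1}^n |X_k|=\sum_{k=1}^n X_k^2=\Sigma_n .
\end{equation*}
Dividing by $n$ and factoring out the Mittag-Leffler normalization, I would then write
\begin{equation*}
\frac{|S_n|}{n}\leq \frac{\Sigma_n}{n}=\frac{1}{n^r}\cdot\frac{\Sigma_n}{n^{1-r}}.
\end{equation*}
By Lemma~\ref{L-ML}, $n^{-(1-r)}\Sigma_n\to \Sigma$ almost surely with $\Sigma$ almost surely finite, while $n^{-r}\to 0$ because the standing assumption $0<r<1$ forces $r>0$. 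Hence $n^{-1}\Sigma_n\to 0$ a.s., and the sandwich above yields $S_n/n\to 0$ a.s., which is exactly \eqref{ASCVGDR}.

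I expect there to be no genuine obstacle here: the whole difficulty has already been absorbed into Lemma~\ref{L-ML}, and the only points that require care are that one must invoke its almost sure (not merely $\dL^1$) conclusion and the strict positivity $r>0$. It is worth emphasising that this coarse statement does not use the diffusive hypothesis $p_r<3/4$ at all: the bound $|S_n|\leq\Sigma_n=o(n)$ holds in \emph{every} regime as soon as $r>0$. The diffusive restriction only becomes genuinely relevant for the sharper, self-normalized fluctuation results of this section.

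For completeness, and because it is what connects to the keystone martingales, I would also note the sharper martingale route, which improves the rate. Writing $\cF_n=\sigma(X_1,\ldots,X_n)$ and $a=p-q$, one checks $\dE[X_{n+1}\mid\cF_n]=\tfrac{a}{n}S_n$, so that $M_n=\gamma_n S_n$ with $\gamma_n=\prod_{k=1}^{n-1}\tfrac{k}{k+a}\sim\Gamma(1+a)\,n^{-a}$ is a martingale. Using $\dE[X_{n+1}^2\mid\cF_n]=\tfrac{1-r}{n}\Sigma_n$ together with Lemma~\ref{L-ML}, its predictable quadratic variation is of order $n^{1-2a-r}$ almost surely; in the diffusive regime $2a+r<1$ (which is precisely $p_r<3/4$), so $\langle M\rangle_n\to\infty$, and the strong law for martingales gives $M_n=o\bigl(\langle M\rangle_n^{1/2}(\log\langle M\rangle_n)^{1/2+\veps}\bigr)$, whence $S_n=o\bigl(n^{(1-r)/2}(\log n)^{1/2+\veps}\bigr)$ and a fortiori $S_n/n\to 0$. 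The only obstacle in this second route is controlling the negative correction $-\tfrac{a^2}{n^2}S_n^2$ in the conditional variance and guaranteeing $\langle M\rangle_n\to\infty$; both are handled again through $|S_n|\leq\Sigma_n$ and $\Sigma_n/n\to0$, the divergence of $\langle M\rangle_n$ relying crucially on the almost sure positivity of $\Sigma$.
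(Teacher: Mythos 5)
Your main argument is correct, but it is genuinely different from --- and considerably more elementary than --- the paper's proof. The paper proves \eqref{ASCVGDR} through the martingale $M_n=a_nS_n$: it bounds the predictable quadratic variation via the decomposition \eqref{CALCIPM} and the convergence \eqref{CVGVNDR}, obtaining $\langle M \rangle_n = O(n^{1-r-2a})$ a.s., then invokes the strong law of large numbers for martingales (Theorem 1.3.24 in Duflo) to get $M_n^2=O(\langle M \rangle_n \log \langle M \rangle_n)$ a.s., hence $S_n^2=O(n^{1-r}\log n)$ a.s.; this is essentially your ``second route.'' Your primary route bypasses all of this: since every step lies in $\{-1,0,1\}$, the pathwise domination $|S_n|\leq \Sigma_n$ holds, and Lemma~\ref{L-ML} together with the standing assumption $r>0$ gives $\Sigma_n/n = n^{-r}\,(\Sigma_n/n^{1-r}) \to 0$ a.s. What each approach buys: yours is shorter, needs no martingale limit theory beyond Lemma~\ref{L-ML}, and, as you correctly observe, proves \eqref{ASCVGDR} in \emph{every} regime; it even delivers the auxiliary fact \eqref{PRATEDR} that the paper records for later use in the law of iterated logarithm, since $n^{r}(S_n/n)^2\leq n^{-r}\,(\Sigma_n/n^{1-r})^2\to 0$ a.s. The paper's martingale route costs more but yields the sharper almost sure rate $S_n^2=O(n^{1-r}\log n)$, i.e.\ $|S_n|$ of order $n^{(1-r)/2}$ up to logarithms rather than your $n^{1-r}$, which is the natural precursor of Theorem~\ref{T-LIL-DR}. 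One small remark on your sketch of the martingale route: divergence of $\langle M \rangle_n$ (and hence the a.s.\ positivity of $\Sigma$) is not actually needed there, because the cited strong law also covers the set where $\langle M \rangle_n$ converges, on which $M_n$ converges a.s.\ and $S_n=O(n^{a})=o(n)$; but this does not affect your main, elementary proof, which is valid as written.
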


\noindent
Some refinements given by the law of iterated logarithm are as follows. One can observe that it is necessary
to self-normalized the position $S_n$ by the numbers of ones $\Sigma_n$ of the ERWS in order 
to prove the  law of iterated logarithm. Denote by $\sigma_r^2$ the asymptotic variance
given by
\begin{equation}
\label{VARDR}
\sigma_r^2=\frac{1-r}{3(1-r)-4p}.
\end{equation}

\begin{thm}
\label{T-LIL-DR}
We have the the law of the iterated logarithm
\begin{equation}
 \limsup_{n \rightarrow \infty} \frac{S_n}{\sqrt{2 \Sigma_n \log \log \Sigma_n}}   = 
 -\liminf_{n \rightarrow \infty} \frac{S_n}{\sqrt{2 \Sigma_n \log \log \Sigma_n}} 
 = \sigma_r\hspace{1cm} \text{a.s.}
 \label{LIL-DR1}
\end{equation}
leading to
\begin{equation}
 \limsup_{n \rightarrow \infty} \frac{S_n^2}{2 \Sigma_n \log \log \Sigma_n}=  
\sigma_r^2 \hspace{1cm} \text{a.s.}
 \label{LIL-DR2}
\end{equation}
Moreover, we also have 
\begin{equation}
 \limsup_{n \rightarrow \infty} \frac{S_n}{\sqrt{2 n^{1-r} \log \log n}}   = 
 -\liminf_{n \rightarrow \infty} \frac{S_n}{\sqrt{2 n^{1-r} \log \log n}} 
 = \sigma_r \sqrt{\Sigma}\hspace{1cm} \text{a.s.}
 \label{LIL-DR3}
\end{equation}
implying that
\begin{equation}
 \limsup_{n \rightarrow \infty} \frac{S_n^2}{2 n^{1-r} \log \log n}=  
 \sigma_r^2 \Sigma \hspace{1cm} \text{a.s.}
 \label{LIL-DR4}
\end{equation}
where $\Sigma$ is the limiting random variable given in \eqref{ASCVGML}.
\end{thm}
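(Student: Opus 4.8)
The plan is to deduce the law of the iterated logarithm for $S_n$ from a law of the iterated logarithm for the keystone martingale analyzed in Section~\ref{S-MA}. Set $a=p-q$. Since in \eqref{STEPSERWS} the index $k$ is drawn uniformly from $\{1,\dots,n\}$, one has the conditional moments
\[
\dE[X_{n+1}\mid\cF_n]=\frac{a}{n}\,S_n,
\qquad
\dE[X_{n+1}^2\mid\cF_n]=(1-r)\,\frac{\Sigma_n}{n}.
\]
Hence $\dE[S_{n+1}\mid\cF_n]=\frac{n+a}{n}S_n$, and $M_n=S_n/b_n$ with $b_n=\Gamma(n+a)/(\Gamma(1+a)\Gamma(n))$ is a square-integrable martingale satisfying $b_n\sim n^{a}/\Gamma(1+a)$ by Stirling's formula; this is precisely the content of Section~\ref{S-MA}. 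I would also record that $X_n\in\{-1,0,1\}$, so that $X_n^2=|X_n|$ and therefore $|S_n|\le\Sigma_n$.

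The core of the proof is the almost sure asymptotics of the predictable quadratic variation $\cMc_n$. A direct computation gives the increment $\Delta M_{n+1}=\frac{n}{(n+a)b_n}\bigl(X_{n+1}-\frac{a}{n}S_n\bigr)$, whence
\[
\dE\bigl[(\Delta M_{n+1})^2\mid\cF_n\bigr]
=\frac{n^2}{(n+a)^2b_n^2}\Bigl((1-r)\frac{\Sigma_n}{n}-\frac{a^2}{n^2}S_n^2\Bigr).
\]
Invoking Lemma~\ref{L-ML}, which provides $\Sigma_n\sim\Sigma\,n^{1-r}$ almost surely with $\Sigma>0$ a.s., and bounding the second term through $S_n^2\le\Sigma_n^2$, one checks that this second term is of strictly smaller order than the first as soon as $r>0$. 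Since $a=2p+r-1$, the leading exponent equals $2a+r=4p+3r-2$, which is $<1$ exactly in the diffusive regime $p_r<3/4$; the resulting series therefore diverges and, after summation,
\[
\cMc_n\sim\Gamma(1+a)^2\,\sigma_r^2\,\Sigma\;n^{3(1-r)-4p}
\qquad\text{a.s.},
\]
with $\sigma_r^2$ as in \eqref{VARDR}. I expect this step, and especially the control of the random second-order term, to be the main obstacle, although the crude bound $|S_n|\le\Sigma_n$ renders it routine.

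I would then verify the jump condition needed for a martingale law of the iterated logarithm of Stout type (as also found in Hall and Heyde's monograph). Because $|\Delta M_n|=O(n^{-a})$ whereas $\bigl(\cMc_n/\log\log\cMc_n\bigr)^{1/2}$ is of order $n^{(1-2a-r)/2}/\sqrt{\log\log n}$, the elementary inequality $-a<(1-2a-r)/2$, equivalent to $r<1$, shows that $|\Delta M_n|=o\bigl((\cMc_n/\log\log\cMc_n)^{1/2}\bigr)$. As $\Sigma>0$ a.s. forces $\cMc_n\to\infty$ a.s., the martingale law of the iterated logarithm applies and yields
\[
\limsup_{n\rightarrow\infty}\frac{M_n}{\sqrt{2\,\cMc_n\log\log\cMc_n}}
=-\liminf_{n\rightarrow\infty}\frac{M_n}{\sqrt{2\,\cMc_n\log\log\cMc_n}}=1
\qquad\text{a.s.}
\]

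Finally, I would translate this back to $S_n$. Substituting $S_n=b_nM_n$ together with the asymptotics of $b_n$ and $\cMc_n$, and using that $\Sigma>0$ a.s. so that $\log\log\cMc_n\sim\log\log n$, the factors $\Gamma(1+a)$ cancel and the exponent collapses to $(1-r)/2$; this produces \eqref{LIL-DR3} with limiting constant $\sigma_r\sqrt{\Sigma}$. Replacing the deterministic normalization $n^{1-r}$ by $\Sigma_n$ via $\Sigma_n\sim\Sigma\,n^{1-r}$ then gives the self-normalized statement \eqref{LIL-DR1}. Squaring \eqref{LIL-DR1} and \eqref{LIL-DR3} immediately yields \eqref{LIL-DR2} and \eqref{LIL-DR4}, which completes the plan.
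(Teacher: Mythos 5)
Your proposal is correct, and it reaches the conclusion by a genuinely different route than the paper, although the two arguments share the same skeleton: pass to the martingale $M_n=a_nS_n$, obtain the almost sure asymptotics of $\langle M\rangle_n$ from the Mittag-Leffler limit of Lemma \ref{L-ML} (your ``after summation'' step is an application of Toeplitz's lemma and should be stated as such), and transfer the martingale LIL back to $S_n$ and $\Sigma_n$. The difference is the martingale LIL that is invoked. The paper applies Theorem 1 and Corollary 2 of Heyde (1977), which forces two extra verifications: the fourth-moment summability $\sum_n n^{-2(1-r-2a)}\dE\big[|\Delta M_n|^4\big]<\infty$, obtained through the conditional fourth-moment formula and Wendel's inequality, and the almost sure convergence of the auxiliary ``stability'' martingale $P_n=\sum_{k} \frac{a_k^2}{k^{b-2a}}\big(\varepsilon_k^2-\dE[\varepsilon_k^2\,|\,\cF_{k-1}]\big)$. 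You instead invoke Stout's LIL (as in Hall and Heyde's monograph), which only needs $\langle M\rangle_n\to\infty$ a.s. together with increments dominated by a predictable sequence that is $o\big((\langle M\rangle_n/\log\log\langle M\rangle_n)^{1/2}\big)$; since $|\Delta M_n|\le 2a_n$ is deterministic and $\Sigma>0$ a.s., your verification is valid and distinctly lighter. Two further points of comparison are worth recording. First, you neutralize the term $-a^2W_n$ in $\langle M\rangle_n$ by the crude bound $|S_n|\le\Sigma_n$, which works precisely because of the paper's standing assumption $0<r<1$; the paper instead uses the rate \eqref{PRATEDR} extracted from the proof of Theorem \ref{T-ASCVG-DR}, which also covers $r=0$. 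Second, the paper's heavier verification is not wasted effort: the same fourth-moment bound \eqref{BSUMDR} and the same auxiliary martingale $(P_n)$ are reused in the proof of the asymptotic normality (Theorem \ref{T-AN-DR}), and Heyde's theorem simultaneously yields the random-norming convergence behind \eqref{ANDRML}, whereas your Stout-based argument delivers the LIL only.
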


\begin{rem}
We immediately deduce from \eqref{LIL-DR4} the almost sure rate of convergence
\begin{equation}
\label{RATEDR}
\Big(\frac{S_n}{n}\Big)^2 = O\Big( \frac{\log \log n}{n^{1+r}} \Big) \hspace{1cm}\text{a.s.}
\end{equation}
\end{rem}

\noindent
Our next result concerns the asymptotic normality of the ERWS. As previously seen, it is necessary
to self-normalized the position $S_n$ by the numbers of ones $\Sigma_n$ of the ERWS in order 
to establish the asymptotic normality. 

\begin{thm}
\label{T-AN-DR}
We have the asymptotic normality
\begin{equation}
\label{ANDR}
\frac{S_n}{\sqrt{\Sigma_n}} \liml \cN\big(0, \sigma_r^2\big).
\end{equation}
Moreover, we also have
\begin{equation}
\label{ANDRML}
\frac{S_n}{\sqrt{n^{1-r}}} \liml \sqrt{\Sigma^\prime}\cN\big(0, \sigma_r^2\big)
\end{equation}
where $\Sigma^\prime$ is independent of the Gaussian $\cN\big(0, \sigma_r^2\big)$ random variable
and $\Sigma^\prime$ has a Mittag-Leffler distribution with parameter $1-r$.
\end{thm}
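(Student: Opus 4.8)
The plan is to realize the position $S_n$ as a deterministic multiple of the keystone martingale of Section \ref{S-MA} and to feed it into a \emph{stable} central limit theorem; the self-normalized structure is exactly what turns out to produce the honest Gaussian in \eqref{ANDR} and the Mittag-Leffler mixture in \eqref{ANDRML}. Set $a=p-q=2p-(1-r)$ and
\[
a_n=\frac{\Gamma(n+a)}{\Gamma(1+a)\,\Gamma(n)},\qquad a_n\underset{n\to\infty}{\sim}\frac{n^a}{\Gamma(1+a)}.
\]
Since the elephant picks $k$ uniformly in $\{1,\dots,n\}$ and then copies, reverses or stops, one gets $\dE[X_{n+1}\mid\cF_n]=\tfrac{a}{n}S_n$ and $\dE[X_{n+1}^2\mid\cF_n]=\tfrac{1-r}{n}\Sigma_n$, so that $M_n=S_n/a_n$ is an $(\cF_n)$-martingale. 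A direct computation gives the increment $\Delta M_{n+1}=(nX_{n+1}-aS_n)/(a_n(n+a))$ and the predictable quadratic variation
\[
\langle M\rangle_n=\sum_{k=1}^{n-1}\frac{(1-r)\,k\,\Sigma_k-a^2S_k^2}{a_k^2\,(k+a)^2}.
\]

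First I would pin down the exact growth of $\langle M\rangle_n$. Using $a_k^2(k+a)^2\sim\Gamma(1+a)^{-2}k^{2a+2}$, Lemma \ref{L-ML} in the form $\Sigma_k/k^{1-r}\to\Sigma$ a.s., and a Toeplitz/Abel summation, the leading term is governed by $\sum_{k\le n}k^{-r-2a}$, whose exponent equals $2-3r-4p=3(1-r)-4p-1$. Precisely in the diffusive regime $p_r<3/4$ this exceeds $-1$, the series diverges, and with $v_n^2:=n^{3(1-r)-4p}$ one obtains $\langle M\rangle_n/v_n^2\to\eta^2:=\frac{(1-r)\Gamma(1+a)^2}{3(1-r)-4p}\,\Sigma$ a.s. The subtracted term is negligible: the law of the iterated logarithm \eqref{LIL-DR4} yields $S_k^2=O(k^{1-r}\log\log k)$ a.s., whence $\sum_k a^2S_k^2/(a_k^2(k+a)^2)=O(n^{2-3r-4p}\log\log n)=o(v_n^2)$. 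Because $\Sigma>0$ a.s. by Lemma \ref{L-ML}, the limit $\eta^2$ is a.s. positive, which will be essential. Next I would verify the conditional Lindeberg condition: the bounds $|X_{n+1}|\le1$ and $|S_k|\le k$ give the deterministic estimate $|\Delta M_{k+1}|\le C(1+|a|)/a_k$, and a short computation shows $\max_{1\le k\le n}|\Delta M_{k+1}|/v_n\to0$ in both cases $a\ge0$ and $a<0$ (the worst-case exponent being $-(1-r)/2<0$). Together with $\langle M\rangle_n/v_n^2\to\eta^2$, the stable martingale central limit theorem then gives $M_n/\sqrt{\langle M\rangle_n}\liml N$, where $N\sim\cN(0,1)$ is \emph{independent} of $\cF_\infty$ and the convergence is $\cF_\infty$-stable.

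It remains to translate this back to $S_n$. Writing
\[
\frac{S_n}{\sqrt{\Sigma_n}}=\frac{a_n\sqrt{\langle M\rangle_n}}{\sqrt{\Sigma_n}}\cdot\frac{M_n}{\sqrt{\langle M\rangle_n}}
\]
and checking, via $a_n^2\sim\Gamma(1+a)^{-2}n^{2a}$, the bracket asymptotics and $\Sigma_n/n^{1-r}\to\Sigma$, that $a_n^2\langle M\rangle_n/\Sigma_n\to\sigma_r^2$ a.s. (the powers of $n$ cancel because $2a+2(1-r)-4p=0$, and the random factor $\Sigma$ cancels as well), Slutsky yields \eqref{ANDR}. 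For \eqref{ANDRML} I would use that $\cF_\infty$-stable convergence is preserved under multiplication by an a.s.\ convergent $\cF_\infty$-measurable factor: from $S_n/\sqrt{\Sigma_n}\liml\cN(0,\sigma_r^2)$ stably and $\sqrt{\Sigma_n/n^{1-r}}\to\sqrt{\Sigma}$ a.s.,
\[
\frac{S_n}{\sqrt{n^{1-r}}}=\sqrt{\frac{\Sigma_n}{n^{1-r}}}\;\frac{S_n}{\sqrt{\Sigma_n}}\;\liml\;\sqrt{\Sigma}\,\cN\big(0,\sigma_r^2\big);
\]
since $N$ is independent of $\cF_\infty\ni\Sigma$, the limiting law is that of $\sqrt{\Sigma^\prime}\,\cN(0,\sigma_r^2)$ with $\Sigma^\prime$ an independent Mittag-Leffler$(1-r)$ variable, which is exactly \eqref{ANDRML}.

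The conceptual heart, and the expected main obstacle, is the \emph{random} normalization. Because $\langle M\rangle_n$ grows at the random rate $\eta^2v_n^2$ with $\eta^2\propto\Sigma$, the ordinary central limit theorem does not apply, and one must establish genuine $\cF_\infty$-stable convergence so that the limiting Gaussian is independent of $\Sigma$; this independence is precisely what distinguishes the honest Gaussian \eqref{ANDR} from the mixture \eqref{ANDRML}, and it explains why self-normalizing by $\Sigma_n$ rather than by $n^{1-r}$ is forced upon us, the factor $\sqrt{\Sigma}$ cancelling in the former and surviving in the latter. The two supporting technical points are the sharp evaluation of $\langle M\rangle_n$ (which needs the a priori bound $S_k^2=O(k^{1-r}\log\log k)$ from the law of the iterated logarithm to discard the $-a^2S_k^2$ contribution) and the a.s.\ positivity $\Sigma>0$, without which dividing by $\sqrt{\Sigma_n}$ or $\sqrt{\langle M\rangle_n}$ would be illegitimate.
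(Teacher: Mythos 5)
Your proposal is correct and takes essentially the same route as the paper: the same keystone martingale $(M_n)$ of Section \ref{S-MA} (your $a_n$ is the reciprocal of the paper's, but the martingale is identical), the same almost sure bracket asymptotics $\langle M\rangle_n/n^{1-r-2a}\to(\Gamma(a+1))^2\sigma_r^2\,\Sigma$ with the a.s.\ positivity of $\Sigma$ doing the crucial work, and a martingale CLT with stable/mixing convergence plus Slutsky-type arguments, which is exactly what the paper's appeal to Heyde's theorem delivers for both \eqref{ANDR} and \eqref{ANDRML}. The only divergences are technical and harmless: you verify the conditional Lindeberg condition through the deterministic increment bound $|\Delta M_k|=O(k^{-a})$ where the paper uses fourth moments, Kronecker's lemma and the auxiliary martingale $(P_n)$, and you discard the $-a^2S_k^2$ term in the bracket via the LIL \eqref{LIL-DR4}, where the weaker rate \eqref{PRATEDR} from the proof of Theorem \ref{T-ASCVG-DR} already suffices.
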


\begin{rem}
In the special case $r=0$, the elephant random walk without stop coincides
with the standard ERW.  Consequently, $\Sigma_n=n$, $\Sigma=1$ and $\sigma_r^2$ reduces to
$$
\sigma^2=\frac{1}{3-4p}.
$$
Hence, we find again the law of iterated logarithm given by Theorem 3.2 in \cite{Bercu2018}
as well as the asymptotic normality given by Theorem 2 in \cite{Coletti2017} or Theorem 3.3 in \cite{Bercu2018}.
\end{rem}


\subsection{The critical regime}

Hereafter, we focus our attention on the almost sure convergence of the ERWS in the critical
regime where $p_r=3/4$.

\begin{thm}
\label{T-ASCVG-CR}
We have the almost sure convergence
\begin{equation}
\label{ASCVGCR}
\lim_{n \rightarrow \infty} \frac{S_n}{n} = 0 \hspace{1cm}\text{a.s.}
\end{equation}
\end{thm}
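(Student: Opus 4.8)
The plan is to reuse the martingale machinery that underlies the diffusive case, the crucial difference being that at criticality the predictable quadratic variation diverges only logarithmically instead of polynomially. Conditioning on $\cF_n=\sigma(X_1,\ldots,X_n)$ and recalling that the elephant samples a past time uniformly, one computes $\dE[X_{n+1}\,|\,\cF_n]=\frac{p-q}{n}S_n$, so that $\dE[S_{n+1}\,|\,\cF_n]=\frac{n+a}{n}S_n$ with $a=p-q=2p-(1-r)$. Hence, setting $a_n=\Gamma(1+a)\Gamma(n)/\Gamma(n+a)$, the sequence $M_n=a_nS_n$ is a martingale, and $a_n\sim\Gamma(1+a)\,n^{-a}$ by Stirling's formula. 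At criticality, $p_r=3/4$ forces $p=3(1-r)/4$ and $q=(1-r)/4$, whence $a=(1-r)/2$; in particular $2a+r=1$, which is exactly the borderline producing the logarithmic regime.

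First I would compute the increment $M_{n+1}-M_n=a_{n+1}\big(X_{n+1}-\dE[X_{n+1}\,|\,\cF_n]\big)$ together with the conditional variance $\dE[(M_{n+1}-M_n)^2\,|\,\cF_n]=a_{n+1}^2\big(\tfrac{1-r}{n}\Sigma_n-\tfrac{a^2}{n^2}S_n^2\big)$, using that $\dE[X_{n+1}^2\,|\,\cF_n]=\tfrac{1-r}{n}\Sigma_n$. Summing these yields the predictable quadratic variation $\cMc_n$. The leading contribution is $\sum_k a_{k+1}^2\tfrac{1-r}{k}\Sigma_k$; by Lemma \ref{L-ML} we have $\Sigma_k\sim\Sigma\,k^{1-r}$ almost surely, and combined with $a_{k+1}^2\sim\Gamma(1+a)^2k^{-2a}$ the $k$-th summand behaves like a constant times $\Sigma\,k^{-(2a+r)}=\Sigma\,k^{-1}$, so this part grows like $(1-r)\Gamma(1+a)^2\,\Sigma\,\log n$. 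The remaining term is harmless: since $|S_k|\le\Sigma_k$, one has $a_{k+1}^2\tfrac{a^2}{k^2}S_k^2=O(k^{-1-r})$, which is summable. Consequently $\cMc_n\sim(1-r)\Gamma(1+a)^2\,\Sigma\,\log n$, and because $\Sigma>0$ almost surely by Lemma \ref{L-ML}, I obtain $\cMc_n\to\infty$ a.s.

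With divergence of $\cMc_n$ established, the strong law of large numbers for martingales gives $M_n=o(\cMc_n)=o(\log n)$ almost surely. Finally, since $a_n\,n\sim\Gamma(1+a)\,n^{1-a}$ with $1-a=(1+r)/2>0$, I would conclude
\[
\frac{S_n}{n}=\frac{M_n}{a_n\,n}=o\!\left(\frac{\log n}{n^{(1+r)/2}}\right)\longrightarrow 0\qquad\text{a.s.}
\]
The hard part will be the estimate of $\cMc_n$: one must transfer the almost sure Mittag-Leffler asymptotics of $\Sigma_k$ from Lemma \ref{L-ML} through the weighted sum with care, control the lower-order term involving $S_k^2$, and—decisively—invoke the almost sure strict positivity of $\Sigma$ to guarantee that the quadratic variation genuinely diverges, which is precisely what legitimizes the application of the martingale strong law.
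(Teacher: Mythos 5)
Your proof is correct and takes essentially the same route as the paper: the martingale $M_n=a_nS_n$, the estimate of $\langle M\rangle_n$ of order $\log n$ obtained from Lemma \ref{L-ML} together with Toeplitz's lemma, and the strong law of large numbers for martingales. The only cosmetic differences are that the paper needs just the upper bound $\langle M\rangle_n=O(\log n)$ a.s. (the term involving $S_k^2$ enters $\langle M\rangle_n$ with a negative sign, so it can simply be dropped rather than bounded via $|S_k|\leq \Sigma_k$), and it invokes the sharper form $M_n^2=O(\langle M\rangle_n\log\langle M\rangle_n)$ of the strong law, giving the slightly better rate $S_n^2=O(n^{1-r}\log n\log\log n)$ a.s., which is also what produces the auxiliary estimate $n^{r}(S_n/n)^2\to 0$ reused later in the critical law of iterated logarithm.
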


\noindent
Our next result deals with to the law of iterated logarithm of the ERWS with a quite unusual rate of convergence.

\begin{thm}
\label{T-LIL-CR}
We have the law of the iterated logarithm
\begin{eqnarray}
 \limsup_{n \rightarrow \infty} \frac{S_n}{\sqrt{2 \Sigma_n \log \Sigma_n \log \log \log \Sigma_n}}  
&  = &
 -\liminf_{n \rightarrow \infty} \frac{S_n}{\sqrt{2 \Sigma_n \log \Sigma_n \log \log \log \Sigma_n}} 
 \notag \\
 & =& 1\hspace{1cm} \text{a.s.}
 \label{LIL-CR1}
\end{eqnarray}
leading to
\begin{equation}
 \limsup_{n \rightarrow \infty} \frac{S_n^2}{2 \Sigma_n \log \Sigma_n \log \log \log \Sigma_n}=  
1\hspace{1cm} \text{a.s.}
 \label{LIL-CR2}
\end{equation}
Moreover, we also have 
\begin{eqnarray}
 \limsup_{n \rightarrow \infty} \frac{S_n}{\sqrt{2 n^{1-r} \log n \log \log \log n}}  
&  = &
 -\liminf_{n \rightarrow \infty} \frac{S_n}{\sqrt{2 n^{1-r} \log n \log \log \log n}} 
 \notag \\
 & =& \sqrt{(1-r) \Sigma}\hspace{1cm} \text{a.s.}
 \label{LIL-CR3}
\end{eqnarray}
implying that
\begin{equation}
 \limsup_{n \rightarrow \infty} \frac{S_n^2}{2 n^{1-r} \log n \log \log \log n}=  
 (1-r) \Sigma \hspace{1cm} \text{a.s.}
 \label{LIL-CR4}
\end{equation}
where $\Sigma$ is the limiting random variable given in \eqref{ASCVGML}.
\end{thm}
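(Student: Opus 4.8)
The plan is to deduce all four assertions from a martingale law of the iterated logarithm applied to the keystone martingale of Section~\ref{S-MA}, the decisive input being a precise almost sure analysis of its predictable quadratic variation. Set $\gamma = p-q = 2p+r-1$ and observe, via \eqref{MEMPAR}, that the critical condition $p_r = 3/4$ is equivalent to $4p+3r = 3$, that is to $2\gamma = 1-r$. A direct computation on the dynamics \eqref{STEPSERWS} gives, for the natural filtration $(\cF_n)$,
\[
\dE[X_{n+1}\mid\cF_n]=\frac{\gamma}{n}\,S_n,\qquad \dE[X_{n+1}^2\mid\cF_n]=\frac{1-r}{n}\,\Sigma_n ,
\]
so that $M_n=a_nS_n$ is a martingale once $a_n=\Gamma(n)\Gamma(1+\gamma)/\Gamma(n+\gamma)$, which satisfies $a_n\sim C\,n^{-\gamma}$ with $C=\Gamma(1+\gamma)$. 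Its increments are $\Delta M_{n+1}=a_{n+1}(X_{n+1}-\dE[X_{n+1}\mid\cF_n])$ and are uniformly bounded, since $|\Delta M_{n+1}|\le 2a_{n+1}\to 0$.

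The heart of the proof is the almost sure behaviour of the bracket. From the conditional moments above,
\[
\cMc_n=\sum_{k=1}^{n-1}a_{k+1}^2\Bigl(\frac{1-r}{k}\Sigma_k-\frac{\gamma^2}{k^2}S_k^2\Bigr).
\]
For the first sum I would insert $a_{k+1}^2\sim C^2k^{-(1-r)}$ together with the almost sure equivalence $\Sigma_k\sim\Sigma\,k^{1-r}$ supplied by Lemma~\ref{L-ML}; since $2\gamma=1-r$, the general term is almost surely equivalent to $C^2(1-r)\Sigma/k$, whence a Cesàro (Kronecker) argument yields $\sum_{k=1}^{n-1}a_{k+1}^2\frac{1-r}{k}\Sigma_k\sim C^2(1-r)\Sigma\log n$ almost surely. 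For the second sum I would use the crude bound $S_k^2\le\Sigma_k^2\sim\Sigma^2k^{2(1-r)}$, so that its general term is $O(k^{-1-r})$ and the whole sum converges almost surely because $r>0$; it is therefore negligible. This produces the key asymptotics
\[
\cMc_n\sim C^2(1-r)\,\Sigma\,\log n\qquad\text{a.s.},
\]
which is exactly the logarithmic growth distinguishing the critical regime from the diffusive one, where the analogous exponent $-2\gamma-r$ exceeds $-1$ and the bracket grows polynomially.

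Since the increments are bounded and $\cMc_n\to\infty$ almost surely, the increment condition of the martingale law of the iterated logarithm (in the self-normalised form of Stout) holds trivially, and it delivers
\[
\limsup_{n\to\infty}\frac{M_n}{\sqrt{2\cMc_n\log\log\cMc_n}}=-\liminf_{n\to\infty}\frac{M_n}{\sqrt{2\cMc_n\log\log\cMc_n}}=1\qquad\text{a.s.}
\]
It then remains to unwind this statement. From $\cMc_n\sim C^2(1-r)\Sigma\log n$ one gets $\log\cMc_n\sim\log\log n$ and hence $\log\log\cMc_n\sim\log\log\log n$, while $M_n=a_nS_n\sim C\,n^{-(1-r)/2}S_n$ and $n^{2\gamma}=n^{1-r}$. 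Substituting these equivalences turns the displayed martingale LIL into \eqref{LIL-CR3} and, after squaring, into \eqref{LIL-CR4}. Finally, the self-normalised statements \eqref{LIL-CR1} and \eqref{LIL-CR2} follow by replacing $n^{1-r}$, $\log n$ and $\log\log\log n$ by $\Sigma_n/\Sigma$, $\log\Sigma_n/(1-r)$ and $\log\log\log\Sigma_n$ respectively, using $\Sigma_n\sim\Sigma\,n^{1-r}$ from \eqref{ASCVGML} once more. The main obstacle is the second step: establishing the almost sure logarithmic equivalence for $\cMc_n$, which requires transferring the almost sure Mittag-Leffler convergence of $\Sigma_n$ into the summand and carrying the genuinely random factor $\Sigma$ through the Cesàro summation; the martingale LIL itself then applies painlessly, precisely because it is self-normalised by the bracket and the increments vanish.
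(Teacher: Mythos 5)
Your proof is correct, and it reaches \eqref{LIL-CR1}--\eqref{LIL-CR4} by a route that genuinely differs from the paper's at the decisive step. The first half is the same: like the paper, you establish the almost sure bracket asymptotics $\langle M \rangle_n \sim (\Gamma(a+1))^2(1-r)\Sigma \log n$ (the paper's \eqref{PLILCR3}, your $\gamma$ being the paper's $a$) by feeding Lemma \ref{L-ML} into a Toeplitz/Ces\`aro argument; your treatment of the quadratic term (bounding $S_k^2 \le \Sigma_k^2$, valid since $|X_i|=X_i^2$, so that the corresponding series converges a.s. because $r>0$) is only a cosmetic variant of the paper's, which instead uses \eqref{PRATECR} and Toeplitz's lemma to get that $W_n$ is $o(\log n)$ a.s. The real divergence is the martingale LIL invoked. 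The paper applies Theorem 1 and Corollary 2 of \cite{Heyde1977}, and must therefore verify a fourth-moment summability condition $\sum_n (\log n)^{-2}\dE[|\Delta M_n|^4] < \infty$ (built on \eqref{CM4EPS}--\eqref{MOMEPS5}) plus the almost sure convergence of the auxiliary martingale $(Q_n)$; you appeal instead to Stout's self-normalized LIL (reproduced in Chapter 4 of \cite{Hall1980}), whose hypotheses here are essentially free: since $|\Delta M_n|\le 2a_n$ deterministically and $\langle M\rangle_n$ is $\cF_{n-1}$-measurable, the dominating sequence $K_n = 2a_n(\log\log \langle M\rangle_n)^{1/2}\langle M\rangle_n^{-1/2}$ is predictable and tends to $0$ a.s. as soon as $\langle M\rangle_n \to \infty$ a.s.---and that last point is exactly where the almost sure positivity of $\Sigma$ (stressed in Lemma \ref{L-ML}) enters; you use it implicitly both here and when you divide by $\sqrt{\Sigma}$ in the unwinding, so it deserves an explicit mention. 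The unwinding itself ($\log\log\langle M\rangle_n \sim \log\log\log n$, then $\Sigma_n \sim \Sigma n^{1-r}$ to pass between the $n^{1-r}$-normalized and $\Sigma_n$-normalized forms) is identical to the paper's. As for what each route buys: yours is shorter and bypasses every fourth-moment computation and the auxiliary martingale entirely; the paper's Heyde--Scott setup is heavier, but the same package of conditions is recycled verbatim to prove the mixed-Gaussian asymptotic normality \eqref{ANCR}--\eqref{ANCRML} of Theorem \ref{T-AN-CR}, a distributional statement that Stout's purely almost sure theorem cannot deliver, so at the scale of the whole paper those conditions must be checked anyway. Two harmless slips to fix: your expression for $\langle M\rangle_n$ omits the initial term $a_1^2\dE[\varepsilon_1^2|\cF_0]=1$ coming from $X_1^2=1$ (immaterial asymptotically), and Stout's bound needs the usual convention $\log\log(\langle M\rangle_n \vee e^e)$ for small $n$.
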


\begin{rem}
We clearly obtain from \eqref{LIL-CR4} the almost sure rate of convergence
\begin{equation}
\label{RATECR}
\Big(\frac{S_n}{n}\Big)^2 = O\Big( \frac{\log n \log \log \log n}{n^{1+r}} \Big) \hspace{1cm}\text{a.s.}
\end{equation}
\end{rem}

\noindent
The asymptotic normality of the ERWS is as follows.

\begin{thm}
\label{T-AN-CR}
We have the asymptotic normality
\begin{equation}
\label{ANCR}
\frac{S_n}{\sqrt{\Sigma_n \log \Sigma_n }} \liml \cN(0, 1).
\end{equation}
Moreover, we also have
\begin{equation}
\label{ANCRML}
\frac{S_n}{\sqrt{n^{1-r} \log n}} \liml \sqrt{(1-r)\Sigma^\prime}\cN(0,1)
\end{equation}
where $\Sigma^\prime$ is independent of the Gaussian $\cN(0, 1)$ random variable
and $\Sigma^\prime$ has a Mittag-Leffler distribution with parameter $1-r$.
\end{thm}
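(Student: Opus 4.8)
The plan is to derive both limits from a central limit theorem for the keystone martingale attached to the position of the ERWS. Following the construction of Section~\ref{S-MA}, set $a=p-q$ and introduce the deterministic weights $\gamma_n=\Gamma(n)\Gamma(1+a)/\Gamma(n+a)$, which satisfy $\gamma_{n+1}(n+a)=\gamma_n n$, so that $M_n=\gamma_n S_n$ is a martingale with respect to $\cF_n=\sigma(X_1,\ldots,X_n)$; indeed the conditional law of $X_{n+1}$ yields $\dE[X_{n+1}\,|\,\cF_n]=a\,S_n/n$, which makes the martingale property immediate. The critical assumption $p_r=3/4$ forces $4p=3(1-r)$, hence $a=(1-r)/2$ and $\gamma_n\sim\Gamma(1+a)\,n^{-(1-r)/2}$ by the classical asymptotics of a ratio of Gamma functions. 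The whole difficulty is that in the critical regime $M_n$ is no longer $\dL^2$-bounded, so one must control its oscillations rather than its limit.

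First I would compute the predictable quadratic variation. Using $\dE[X_{n+1}^2\,|\,\cF_n]=(1-r)\Sigma_n/n$ together with $\dE[X_{n+1}\,|\,\cF_n]=a\,S_n/n$, one obtains
\begin{equation*}
\langle M\rangle_n=\sum_{k=2}^{n}\gamma_k^2\Big((1-r)\frac{\Sigma_{k-1}}{k-1}-a^2\frac{S_{k-1}^2}{(k-1)^2}\Big).
\end{equation*}
The second term is summable and therefore negligible, while for the first I would invoke Lemma~\ref{L-ML}: since $\Sigma_n/n^{1-r}\to\Sigma$ almost surely, the general term behaves like $\Gamma(1+a)^2(1-r)\Sigma\,k^{-1}$, so that a Toeplitz/Ces\`aro argument gives
\begin{equation*}
\frac{\langle M\rangle_n}{\log n}\longrightarrow \Gamma(1+a)^2(1-r)\,\Sigma \qquad\text{a.s.}
\end{equation*}
This $\log n$ growth is exactly what produces the unusual normalization, and it identifies the random limiting variance $\eta^2=\Gamma(1+a)^2(1-r)\Sigma$, which is positive almost surely by Lemma~\ref{L-ML}.

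Next I would check the conditional Lindeberg condition. Since $|X_k|\le 1$ and $\gamma_k-\gamma_{k-1}=O(k^{-a-1})$, the increments obey $|\Delta M_k|\le|\gamma_k-\gamma_{k-1}|\,|S_{k-1}|+\gamma_k|X_k|=O(k^{-(1-r)/2})$ deterministically, so they tend to zero uniformly; against the diverging norming $s_n=\sqrt{\log n}$ the Lindeberg sum vanishes trivially. The central limit theorem for martingales with a random limiting variance, in its stable form, then yields
\begin{equation*}
\frac{M_n}{\sqrt{\log n}}\liml \Gamma(1+a)\sqrt{(1-r)\Sigma^\prime}\,\cN(0,1),
\end{equation*}
where the standard Gaussian is independent of $\Sigma^\prime$ and $\Sigma^\prime$ has a Mittag-Leffler distribution with parameter $1-r$. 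Dividing by $\gamma_n\sqrt{n^{1-r}\log n}\sim\Gamma(1+a)\sqrt{\log n}$ and using $S_n=M_n/\gamma_n$ gives \eqref{ANCRML} at once.

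Finally, for the self-normalized statement \eqref{ANCR} I would exploit the stable nature of this convergence. Writing the target as $S_n/\sqrt{\Sigma_n\log\Sigma_n}=M_n/(\gamma_n\sqrt{\Sigma_n\log\Sigma_n})$ and noting, through Lemma~\ref{L-ML} and $\log\Sigma_n\sim(1-r)\log n$, that $\gamma_n\sqrt{\Sigma_n\log\Sigma_n}\sim\sqrt{\langle M\rangle_n}$, the problem reduces to the asymptotics of $M_n/\sqrt{\langle M\rangle_n}$. Because $M_n/\sqrt{\log n}\to\eta N$ stably while $\langle M\rangle_n/\log n\to\eta^2$ almost surely, the pair converges jointly to $(\eta N,\eta^2)$ with $N$ independent of $\eta$, and the continuous mapping theorem cancels the random factor, giving $M_n/\sqrt{\langle M\rangle_n}\to N\sim\cN(0,1)$. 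The main obstacle is precisely this last cancellation: one cannot simply divide by a deterministic sequence, and the independence of $N$ from $\Sigma$ that is needed both to remove $\eta$ in the ratio and to read off the mixture in \eqref{ANCRML} is guaranteed only by the stable, rather than merely distributional, form of the martingale central limit theorem.
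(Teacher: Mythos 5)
Your proposal is correct and reaches both \eqref{ANCR} and \eqref{ANCRML}, and it shares the paper's skeleton (the same martingale $M_n=a_nS_n$, the same computation of $\langle M\rangle_n$, and the same random limiting variance $(\Gamma(1+a))^2(1-r)\Sigma$ via Lemma \ref{L-ML}), but the CLT machinery you use is genuinely different. The paper verifies the hypotheses of Theorem 1 and Corollaries 1 and 2 of \cite{Heyde1977}: besides the almost sure convergence $\langle M\rangle_n/\log n\to(\Gamma(a+1))^2(1-r)\Sigma$ (its \eqref{PLILCR3}, identical to yours), it checks a Lindeberg-type condition through the fourth-moment bound $\sum_n(\log n)^{-2}\dE[|\Delta M_n|^4]<\infty$ and Kronecker's lemma, and it must additionally prove the almost sure convergence of the auxiliary martingale $Q_n=\sum_{k\leq n}(a_k^2/\log k)\bigl(\varepsilon_k^2-\dE[\varepsilon_k^2|\cF_{k-1}]\bigr)$, a condition specific to Heyde's result; that theorem then delivers \emph{both} $M_n/\sqrt{\langle M\rangle_n}\liml\cN(0,1)$ and the mixed-normal limit of $M_n/\sqrt{\log n}$ in one stroke. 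You instead invoke the standard stable martingale CLT (conditional Lindeberg plus convergence of the normalized predictable quadratic variation), dispose of Lindeberg by the cleaner deterministic bound $|\Delta M_k|=O\bigl(k^{-(1-r)/2}\bigr)$, and then recover the self-normalized statement \eqref{ANCR} from the mixed CLT by joint stable convergence and continuous mapping, using $\Sigma>0$ a.s.\ to divide by the random standard deviation. This trade is legitimate: you avoid Heyde's auxiliary-martingale condition at the price of needing the stable (not merely distributional) form of the CLT, which you correctly identify as the crux of the cancellation; the paper's route buys both conclusions directly from a single theorem.

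One point needs repair. Your claim that $\sum_k\gamma_k^2a^2S_{k-1}^2/(k-1)^2$ is \emph{summable} does not follow from $S_n/n\to0$ alone: that only gives a general term $o\bigl(k^{-(1-r)}\bigr)$, whose partial sums are $o(n^r)$, far larger than $\log n$. You need an almost sure rate, namely $S_n^2=O(n^{1-r}\log n\log\log n)$ a.s., equivalently \eqref{PRATECR}, which the paper establishes in the proof of Theorem \ref{T-ASCVG-CR} and then exploits through Toeplitz's lemma in \eqref{PLILCR2}. With that rate your term is $O\bigl(k^{-2}\log k\log\log k\bigr)$ and summability (hence negligibility against $\log n$) holds, so the gap is fixable with material already in the paper, but as written the assertion is unjustified.
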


\begin{rem}
As previously seen, in the special case of the standard ERW where $r=0$, we have $\Sigma_n=n$.
Hence, we find again the law of iterated logarithm given by Theorem 3.5 in \cite{Bercu2018}
as well as the asymptotic normality given by Theorem 2 in \cite{Coletti2017}
or Theorem 3.6 in \cite{Bercu2018}.
\end{rem}


\subsection{The superdiffusive regime}

We next investigate the almost sure convergence of the ERWS in the superdiffusive
regime where $p_r>3/4$.

\begin{thm}
\label{T-ASCVG-SR}
We have the almost sure convergence
\begin{equation}
\label{ASCVGSR1}
\lim_{n \rightarrow \infty} \frac{S_n}{n^{2p+r-1}} = L \hspace{1cm}\text{a.s.}
\end{equation}
where $L$ is a non-degenerate random variable. 
Moreover, this convergence holds in $\dL^m$ for any integer $m\geq 1$,
which means that for any integer $m\geq 1$,
\begin{equation}
\label{ASCVGSR2}
 \lim_{n \rightarrow \infty} \dE\Bigl[ \Bigl| \frac{S_n}{n^{2p+r-1}} -L \Bigr|^m \Bigr]=0.
\end{equation}
\end{thm}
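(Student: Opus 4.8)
The plan is to reduce \eqref{ASCVGSR1} to the convergence of a single normalized martingale. Write $a=2p+r-1$ (so that $a=p-q$ since $p+q=1-r$) and set $\cF_n=\sigma(X_1,\ldots,X_n)$. Because $X_k\in\{-1,0,1\}$, the dynamics \eqref{STEPSERWS} give, after averaging the uniform choice of $k$,
\[
\dE[X_{n+1}\mid \cF_n]=\frac{p-q}{n}S_n=\frac{a}{n}S_n,
\qquad
\dE[X_{n+1}^2\mid \cF_n]=\frac{1-r}{n}\Sigma_n.
\]
I would then introduce $a_n=\Gamma(n+a)/(\Gamma(a+1)\Gamma(n))$, for which $a_{n+1}/a_n=(n+a)/n$ and $a_n\sim n^{a}/\Gamma(a+1)$, so that $M_n=S_n/a_n$ is an $(\cF_n)$-martingale with
\[
M_{n+1}-M_n=\frac{1}{a_{n+1}}\Big(X_{n+1}-\frac{a}{n}S_n\Big).
\]
Once $M_n\to M_\infty$ almost surely is established, the identification is immediate: since $a_n\sim n^{2p+r-1}/\Gamma(2p+r)$, one gets $S_n/n^{2p+r-1}=M_n\,(a_n/n^{2p+r-1})\to M_\infty/\Gamma(2p+r)=:L$.

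The almost sure and $\dL^2$ convergence come from bounding the predictable quadratic variation
\[
\langle M\rangle_n=\sum_{k=1}^{n-1}\frac{1}{a_{k+1}^2}\Big(\frac{1-r}{k}\Sigma_k-\frac{a^2}{k^2}S_k^2\Big).
\]
Taking expectations and using $a_{k+1}^2\sim k^{2a}/\Gamma(a+1)^2$ together with $\dE[\Sigma_k]\sim k^{1-r}/\Gamma(2-r)$ from Lemma \ref{L-ML}, the general term is $O(k^{-(2a+r)})$. In the superdiffusive regime $p_r>3/4$ defined in \eqref{MEMPAR} one has $4p+3r>3$, that is $2a+r=4p+3r-2>1$, so $\sum_k \dE[(M_{k+1}-M_k)^2]<\infty$ and $\sup_n\dE[M_n^2]<\infty$. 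The martingale convergence theorem then yields $M_n\to M_\infty$ almost surely and in $\dL^2$.

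For the $\dL^m$ statement \eqref{ASCVGSR2} I would rely on Burkholder's inequality in its predictable form,
\[
\dE\big[|M_n-M_1|^{2m}\big]\le C_m\Big(\dE\big[\langle M\rangle_n^{m}\big]+\dE\big[\max_{1\le k<n}|M_{k+1}-M_k|^{2m}\big]\Big),
\]
which, unlike the optional-variation bound, profits from the small factor $\tfrac{1-r}{k}\Sigma_k$ in the conditional variance (the pointwise bound $X_{k+1}^2\le1$ would only give the insufficient series $\sum_k a_{k+1}^{-2}$). Bounding $\langle M\rangle_n\le\sum_k \tfrac{1-r}{ka_{k+1}^2}\Sigma_k$ and applying Minkowski's inequality gives
\[
\big\|\langle M\rangle_\infty\big\|_m\le \sum_k \frac{1-r}{ka_{k+1}^2}\,\|\Sigma_k\|_m,
\]
and Lemma \ref{L-ML} with \eqref{MLMOM} provides $\|\Sigma_k\|_m\le C\,k^{1-r}$, so the series converges precisely because $2a+r>1$; the maximal increment term is bounded since $|M_{k+1}-M_k|\le(1+|a|)/a_{k+1}$. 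Hence $\sup_n\dE[|M_n|^{2m}]<\infty$, the family $(|M_n|^m)$ is uniformly integrable, and the almost sure convergence upgrades to $\dL^m$.

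Finally, for the non-degeneracy of $L$ I would observe that $0<r<1$ forces $a^2=(p-q)^2\le(1-r)^2<1-r$, so conditionally on the choice of an index $k$ with $X_k\neq0$ the step $X_{n+1}$ takes three distinct values with positive probability and $\mathrm{Var}(X_{n+1}\mid\cF_n)>0$ almost surely (recall $\Sigma_n\ge X_1^2=1$). Therefore $\dE[\langle M\rangle_\infty]>0$, and by orthogonality of martingale increments $\mathrm{Var}(M_\infty)=\mathrm{Var}(M_1)+\dE[\langle M\rangle_\infty]>0$, so $L$ is not almost surely constant. The step I expect to be the real obstacle is the $\dL^m$ estimate: one must propagate the Mittag-Leffler moment bounds for $\Sigma_k$ supplied by Lemma \ref{L-ML} through Burkholder's inequality, since the random number of ones $\Sigma_k$ enters the quadratic variation and the bounded-increment shortcut available for the stop-free elephant random walk is no longer by itself enough.
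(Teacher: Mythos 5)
Your proposal is correct, and it reaches the $\dL^m$ boundedness of the martingale --- the crux of the superdiffusive regime --- by a genuinely different route than the paper. Both arguments use the same martingale $M_n$ (your $a_n$ is the reciprocal of the paper's) and both rest on the same key observation, made explicit at the start of Appendix C: the crude increment bound $\varepsilon_n^2\le 4$ only yields summability when $2a>1$, and it is the conditional variance $\dE[\varepsilon_{k+1}^2|\cF_k]\le b\,\Sigma_k/k$, carrying the extra factor $k^{-r}$, that rescues the case $1-r<2a\le 1$. The paper (Lemma \ref{L-MARTM}) exploits this through an induction on $m$ over the Pochhammer moments $\dE[[M]_n^{(m)}]$ of the \emph{optional} quadratic variation, which forces it to control the coupled quantities $\dE[\Sigma_n[M]_n^{(k)}]$ along the way, and then concludes by Burkholder--Davis--Gundy. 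You instead work with the \emph{predictable} quadratic variation and invoke the Rosenthal-type inequality (Theorem 2.11 of \cite{Hall1980}, the very reference the paper cites for BDG), so that $\dE[|M_n-M_1|^{2m}]$ is bounded by $\dE[\langle M\rangle_n^m]$ plus a maximal-increment term that is trivial here since $|\Delta M_k|\le (1+a)/a_{k+1}\le 1+a$; the quadratic-variation term is then dispatched by Minkowski's inequality together with $\|\Sigma_k\|_m\le C_m k^{1-r}$, which is exactly what Lemma \ref{L-MARTN} (or the $\dL^m$ convergence in Lemma \ref{L-ML}) supplies, and summability again comes from $2a+r>1$. Your route is shorter and avoids the induction entirely, at the price of a heavier off-the-shelf inequality; the paper's route is more self-contained and produces explicit bounds in the same combinatorial style as Lemma \ref{L-MARTN}. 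Two further points in your favor: deriving the almost sure convergence already from $\dL^2$ boundedness is lighter than the paper's derivation (where it is a by-product of the $\dL^m$ bounds); and your direct non-degeneracy argument (positive conditional variance plus orthogonality of increments --- note one can simply write $\dE[\varepsilon_{k+1}^2|\cF_k]\ge (b-a^2)\Sigma_k/k\ge b(1-b)/k>0$ a.s., since $|a|\le b<1$ and $\Sigma_k\ge 1$, which also covers the case where one of $p,q$ vanishes) fills a claim that the paper's own proof of Theorem \ref{T-ASCVG-SR} never establishes explicitly, non-degeneracy being left implicit in the moment computations of Theorem \ref{T-MOM-SR}.
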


\begin{thm}
\label{T-MOM-SR}
The first four moments of $L$ are given by 
\begin{eqnarray}
\label{MOML1}
\dE[L]  &=& \frac{2s-1}{(2p+r-1)\Gamma(2p+r-1)},\\
\label{MOML2}
\dE[L^2] &=& \frac{1}{(4p+3(r-1))\Gamma(2(2p+r-1))}, \\
\label{MOML3}
\dE[L^3] &=& \frac{2p(2s-1)}{(2p+r-1)(4p+3(r-1))\Gamma(3(2p+r-1))}, \\
\label{MOML4}
\dE[L^4] &=& \frac{6\big(8p^2-4p(1-r)-(1-r)^2\big)}{(8p+5(r-1))(4p+3(r-1))^2\Gamma(4(2p+r-1))}.
\end{eqnarray}
\end{thm}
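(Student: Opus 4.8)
The plan is to deduce the moments of $L$ from those of $S_n$. Since Theorem \ref{T-ASCVG-SR} guarantees that $S_n/n^{2p+r-1}$ converges to $L$ in $\dL^m$ for every $m\geq 1$, the corresponding moments converge, so that
\[
\dE[L^m]=\lim_{n\to\infty}\frac{\dE[S_n^m]}{n^{m(2p+r-1)}}.
\]
It therefore suffices, for $m\in\{1,2,3,4\}$, to identify the exact leading constant $c_m$ in the expansion $\dE[S_n^m]\sim c_m\,n^{m(2p+r-1)}$, whence $\dE[L^m]=c_m$. Throughout I would use the shorthand $a=p-q=2p+r-1$ and $b=p+q=1-r$, so that the superdiffusive assumption $p_r>3/4$ reads exactly $2a>b$, and I recall the initial data $\Sigma_1=1$ and $\dE[S_1]=\dE[S_1^3]=2s-1$, $\dE[S_1^2]=\dE[S_1^4]=1$.

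First I would compute the conditional moments of the increments. Conditioning on $\cF_n$ and on the uniformly chosen index $k$, and using that $X_k\in\{-1,0,1\}$ forces $X_k^3=X_k$ and $X_k^4=X_k^2$, averaging over $k$ yields
\[
\dE[X_{n+1}\,|\,\cF_n]=\dE[X_{n+1}^3\,|\,\cF_n]=\frac{a}{n}S_n,\qquad
\dE[X_{n+1}^2\,|\,\cF_n]=\dE[X_{n+1}^4\,|\,\cF_n]=\frac{b}{n}\Sigma_n.
\]
Expanding $(S_n+X_{n+1})^m$ together with $\Sigma_{n+1}=\Sigma_n+X_{n+1}^2$ and taking conditional expectations then produces a triangular system of linear recursions for
\[
\dE[\Sigma_n],\ \dE[S_n],\ \dE[S_n^2],\ \dE[S_n\Sigma_n],\ \dE[\Sigma_n^2],\ \dE[S_n^3],\ \dE[S_n^2\Sigma_n],\ \dE[S_n^4],
\]
each of the form $u_{n+1}=u_n\bigl(1+\tfrac{c}{n}\bigr)+\tfrac1n f_n$, in which the forcing $f_n$ only involves moments already resolved at a lower level (for instance $\dE[S_{n+1}^2\Sigma_{n+1}\,|\,\cF_n]=S_n^2\Sigma_n(1+\tfrac{2a+b}{n})+\tfrac{2a}{n}S_n^2+\tfrac{b}{n}\Sigma_n^2+\tfrac{b}{n}\Sigma_n$).

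I would solve each recursion with the integrating factor $\prod_{k=1}^{n-1}(1+c/k)=\Gamma(n+c)/(\Gamma(1+c)\Gamma(n))\sim n^{c}/\Gamma(1+c)$, reducing everything to sums of the type $\sum_k \Gamma(k+\beta)/\Gamma(k+1+\alpha)$. The key tool is the telescoping identity
\[
\sum_{k=1}^{\infty}\frac{\Gamma(k+\beta)}{\Gamma(k+1+\alpha)}=\frac{1}{\alpha-\beta}\,\frac{\Gamma(1+\beta)}{\Gamma(1+\alpha)},\qquad \alpha>\beta,
\]
which follows from $\Gamma(k+\beta)/\Gamma(k+\alpha)-\Gamma(k+1+\beta)/\Gamma(k+1+\alpha)=(\alpha-\beta)\Gamma(k+\beta)/\Gamma(k+1+\alpha)$. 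Because $2a>b$ in the superdiffusive regime, at every level the homogeneous exponent strictly exceeds all exponents of the forcing, so the forcing is summable and $c_m=\Gamma(1+ma)^{-1}$ times the finite limit of the integrating-factor–normalised sequence. Carrying this out for $m=1,2$ recovers $\dE[S_n]\sim(2s-1)n^{a}/\Gamma(1+a)$ and, after one telescoping, $\dE[S_n^2]\sim n^{2a}/\bigl((2a-b)\Gamma(2a)\bigr)$, i.e.\ \eqref{MOML1} and \eqref{MOML2}; for $m=3$ the forcing brings in $\dE[S_n\Sigma_n]\sim (2s-1)n^{a+b}/\bigl(b\,\Gamma(a+b)\bigr)$, and a short double telescoping produces \eqref{MOML3}.

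The hard part is the fourth moment. The recursion for $\dE[S_n^4]$ is driven by $\dE[S_n^2\Sigma_n]$, whose own recursion is driven in turn by $\dE[\Sigma_n^2]$, $\dE[S_n^2]$ and $\dE[\Sigma_n]$; since each intermediate sequence is itself a partial sum, the constant $c_4$ only emerges after several nested summations. The main obstacle will thus be the bookkeeping: one must keep every cross term in the binomial expansion, carry the exact integrating-factor representation of each intermediate moment (not merely its asymptotics) so that the telescoping identity can be reapplied, and check that the spurious contributions cancel. In the variables $a,b$ the final answer is $\dE[L^4]=6(2a^2+2ab-b^2)/\bigl((4a-b)(2a-b)^2\Gamma(4a)\bigr)$, and one verifies $4a-b=8p+5(r-1)$, $2a-b=4p+3(r-1)$ and $2a^2+2ab-b^2=8p^2-4p(1-r)-(1-r)^2$, which is exactly \eqref{MOML4}. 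Convergence and summability of every sum are guaranteed throughout by $2a>b$, equivalently $4p+3(r-1)>0$.
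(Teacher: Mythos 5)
Your proposal is correct and takes essentially the same route as the paper's own proof: both reduce $\dE[L^m]$ to the leading constant of $\dE[S_n^m]$ via the $\dL^m$ convergence of Theorem \ref{T-ASCVG-SR}, set up the same triangular system of recursions for the joint moments $\dE[S_n^j\Sigma_n^k]$ obtained by conditioning (the paper's \eqref{CM1SUM}, \eqref{CM3SUM}, \eqref{CM4SUM}, \eqref{PML3}, \eqref{PML7}), and solve them with Gamma-ratio integrating factors together with the telescoping identity for $\sum_k \Gamma(k+\beta)/\Gamma(k+1+\alpha)$, which is exactly Lemma B.1 of \cite{Bercu2018} invoked in the paper. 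Your intermediate asymptotics and final constants in the $(a,b)$ variables agree with the paper's \eqref{PML1}--\eqref{PML8}.
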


\begin{rem}
One can observe that in the special case of the standard ERW where $r=0$, we find again
the first four moments of $L$ given by Theorem 3.8 in \cite{Bercu2018}.
\end{rem}

\noindent
Our last result concerns the fluctuation of the ERWS around its limiting random variable $L$, 
in the spirit of the original work of Kubota and Takei \cite{Kubota2019}. It shows that the fluctuation 
of the position $S_n$ around $n^{2p+r-1}L$ is still Gaussian. As previously seen,
one can observe that $S_n$ is self-normalized by the numbers of ones $\Sigma_n$ of the ERWS.
Denote by $\tau_r^2$ the asymptotic variance, quite similar to  $\sigma_r^2$ except
for the sign in the denominator, given by
\begin{equation}
\label{VARSR}
\tau_r^2=\frac{1-r}{4p-3(1-r)}.
\end{equation}

\begin{thm}
\label{T-AN-SR}
We have the asymptotic normality
\begin{equation}
\label{ANSR1}
\frac{S_n-n^{2p+r-1}L}{\sqrt{\Sigma_n}}
\liml \cN\big(0,\tau_r^2\big).
\end{equation}
Moreover, we also have
\begin{equation}
\label{ANSR2}
\frac{S_n -n^{2p+r-1}L }{\sqrt{n^{1-r}}} 
\liml \sqrt{\Sigma^\prime}\cN\big(0,\tau_r^2\big)
\end{equation}
where $\Sigma^\prime$ is independent of the Gaussian $\cN\big(0,\tau_r^2\big)$ random variable
and $\Sigma^\prime$ has a Mittag-Leffler distribution with parameter $1-r$.
\end{thm}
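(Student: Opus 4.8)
The plan is to reduce the fluctuation to the tail of the keystone martingale $(M_n)$ of Section \ref{S-MA}. Set $a=2p+r-1$ and $M_n=S_n/a_n$, where $a_n=\prod_{k=1}^{n-1}\frac{k+a}{k}\sim n^a/\Gamma(1+a)$ by Stirling. Since $\dE[X_{n+1}|\cF_n]=(a/n)S_n$, the sequence $(M_n)$ is a martingale whose increments read $M_{k+1}-M_k=\veps_{k+1}/a_{k+1}$ with $\veps_{k+1}=X_{k+1}-(a/k)S_k$. In the superdiffusive regime Theorem \ref{T-ASCVG-SR} gives $M_n\to M$ almost surely and in $\dL^2$, with $L=M/\Gamma(1+a)$. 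First I would write
$$S_n-n^{a}L=a_n(M_n-M)+M\Big(a_n-\frac{n^a}{\Gamma(1+a)}\Big),$$
and since $0<a\le 1-r<1$ throughout the superdiffusive regime, the second term is $O(n^{a-1})$ almost surely and hence negligible after division by $\sqrt{\Sigma_n}\to\infty$. It therefore suffices to analyse $a_n(M-M_n)=\sum_{k\ge n}(a_n/a_{k+1})\veps_{k+1}$.

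The core computation is the tail of the predictable quadratic variation. From $\dE[X_{k+1}^2|\cF_k]=(1-r)\Sigma_k/k$ and $\dE[X_{k+1}|\cF_k]=(a/k)S_k$ one gets $\dE[\veps_{k+1}^2|\cF_k]=(1-r)\Sigma_k/k-a^2S_k^2/k^2$, so that
$$v_n:=\sum_{k\ge n}\frac{1}{a_{k+1}^2}\,\dE[\veps_{k+1}^2|\cF_k].$$
Injecting the almost sure equivalents $\Sigma_k\sim\Sigma\,k^{1-r}$ from \eqref{ASCVGML} and $S_k\sim k^a L$ from \eqref{ASCVGSR1}, together with $a_{k+1}^2\sim k^{2a}/\Gamma(1+a)^2$, the first summand behaves like $k^{-(2a+r)}$ and the second like $k^{-2}$. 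Because $2a+r-1=4p-3(1-r)>0$ while $2a+r\le 2-r<2$ in the superdiffusive regime, the first term dominates, is summable, and yields
$$v_n\sim\frac{(1-r)\,\Gamma(1+a)^2}{4p-3(1-r)}\,\Sigma\,n^{1-2a-r}=\tau_r^2\,\Gamma(1+a)^2\,\Sigma\,n^{1-2a-r}\qquad\text{a.s.},$$
with $\tau_r^2$ as in \eqref{VARSR}. Multiplying by $a_n^2\sim n^{2a}/\Gamma(1+a)^2$ and dividing by $\Sigma_n\sim\Sigma\,n^{1-r}$, the random factor $\Sigma$ cancels and $\frac{a_n^2}{\Sigma_n}\,v_n\to\tau_r^2$ almost surely.

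With the normalized conditional variance identified, I would invoke a central limit theorem for the tail of the $\dL^2$-martingale $(M_n)$, treating $\{(a_n/a_{k+1})\veps_{k+1}\}_{k\ge n}$ as a martingale difference array with deterministic limiting variance $\tau_r^2$. The Lindeberg condition is immediate since $|\veps_{k+1}|\le 2$ while $\sqrt{\Sigma_n}\to\infty$, so every term is eventually below any threshold $\delta\sqrt{\Sigma_n}$. This gives the stable convergence $\frac{a_n(M-M_n)}{\sqrt{\Sigma_n}}\liml\cN(0,\tau_r^2)$, which together with the first paragraph proves \eqref{ANSR1}. For \eqref{ANSR2} I would factor $\frac{S_n-n^aL}{\sqrt{n^{1-r}}}=\frac{S_n-n^aL}{\sqrt{\Sigma_n}}\sqrt{\Sigma_n/n^{1-r}}$ and use $\Sigma_n/n^{1-r}\to\Sigma$ from \eqref{ASCVGML}; since the limiting conditional variance $\tau_r^2$ is a deterministic constant, the convergence in \eqref{ANSR1} is stable, the limiting Gaussian is independent of $\cF_\infty$ hence of $\Sigma$, and the product converges to $\sqrt{\Sigma^\prime}\,\cN(0,\tau_r^2)$ with $\Sigma^\prime$ an independent copy of the Mittag-Leffler limit.

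The main obstacle I anticipate is the rigorous justification of the tail central limit theorem: unlike a standard forward martingale CLT, here one sums infinitely many increments lying strictly in the future of time $n$, and the recentering target $M$ itself depends on that whole future. I would handle this by truncating the tail at an index $N_n\to\infty$, applying the array CLT to the finite block $\sum_{k=n}^{N_n}(a_n/a_{k+1})\veps_{k+1}$, and bounding the remainder in $\dL^2$ through the tail of $v_n$. Securing the \emph{stable} mode of convergence, which is precisely what decouples the Gaussian limit from $\Sigma$ in \eqref{ANSR2}, is the delicate point of the argument.
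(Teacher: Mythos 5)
Your proposal is correct in substance and shares the paper's skeleton: reduce the fluctuation to the tail of the martingale $(M_n)$, compute the tail predictable quadratic variation (your $v_n$ is the paper's $\Lambda_n$ up to normalization, and your asymptotics $a_n^2 v_n/\Sigma_n \to \tau_r^2$ a.s. is exactly \eqref{PANSR4} combined with \eqref{ASCVGML}), then conclude by a stable/mixing central limit theorem for the martingale tail. Where you genuinely diverge is in how that tail CLT is secured. The paper does not prove it by hand: it invokes the second part of Theorem 1 and Corollaries 1 and 2 of Heyde \cite{Heyde1977}, which is a CLT tailored precisely to $M-M_n$ for an $\dL^2$-convergent martingale; verifying its hypotheses requires, besides the variance asymptotics, a Lindeberg-type condition (which the paper checks via the fourth-moment bound \eqref{MOMEPS5} and a Markov argument) and the almost sure convergence of an auxiliary martingale $(P_n)$ built from $\varepsilon_k^2-\dE[\varepsilon_k^2|\cF_{k-1}]$, needed because Heyde's conditions involve the raw squared increments. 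You instead propose to reprove the tail CLT from the nested martingale-difference-array CLT (Hall--Heyde \cite{Hall1980}, Theorem 3.2 style) after truncating the infinite rows; this has two advantages --- the conditional-variance form of that theorem makes the $(P_n)$ step unnecessary, and your Lindeberg verification is simpler and valid, since $|\Delta M_k|\leq 2a_n=O(n^{-a})$ for $k\geq n$ while the threshold is of order $n^{(1-r)/2-a}$, so the indicators vanish identically for large $n$ --- but it carries the cost that the truncation-plus-stable-convergence step you rightly flag as delicate is exactly the content of Heyde's theorem, so executing it rigorously amounts to reproving that result. Two small technical remarks if you carry it out: the $\dL^2$ bound on the truncation remainder is cleaner with the deterministic normalization $n^{(2a+r-1)/2}$ than with the random $\sqrt{\Sigma_n}$ (which sits inside an expectation), and your order of deduction is the reverse of the paper's --- you obtain the self-normalized statement \eqref{ANSR1} first and then \eqref{ANSR2} by mixing, whereas the paper gets the mixed-Gaussian limit \eqref{ANSR2} from Heyde's theorem and then \eqref{ANSR1} by Slutsky's lemma; both directions are legitimate provided, as you note, the convergence is stable, since that is what makes the random factor $\Sigma$ cancel against $\Sigma_n/n^{1-r}$ in one direction and decouple into an independent $\Sigma^\prime$ in the other.
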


\begin{rem}
In the special case  where $r=0$, we have $\Sigma_n=n$ and
we find again a simplified version of Theorem 2.3 in \cite{Kubota2019}.
\end{rem}

\vspace{-2ex}
\section{Two keystone martingales}
\label{S-MA}

\noindent
Denote by $(\cF_n)$ the increasing sequence of $\sigma$-algebras generated by the ERWS, $\mathcal{F}_n=\sigma\left(X_1,\ldots,X_n\right)$.
It follows from \eqref{STEPSERWS} that the steps satisfy, for all $n\geq 1$,
\begin{equation}
\label{CM12X}
\dE[X_{n+1}|\cF_n]=a\frac{S_n}{n}
\hspace{1cm}\text{and}\hspace{1cm}
\dE[X_{n+1}^2|\cF_n]=b\frac{\Sigma_n}{n}
\end{equation}
where the two fundamental parameters $a$ and $b$ are given by
\begin{equation}
\label{DEFAB}
a=p-q
\hspace{1cm}\text{and}\hspace{1cm}
b=p+q.
\end{equation}
Consequently, we clearly obtain from \eqref{POSERWS}, \eqref{ONES} and \eqref{CM12X} that almost surely
\begin{eqnarray}
\label{CM1SUM}
\dE[S_{n+1}|\cF_n] &=& \dE[S_n+ X_{n+1}|\cF_n]= S_n + \frac{a}{n} S_n= \alpha_n S_n, \\
\dE[\Sigma_{n+1}|\cF_n] &=& \dE[\Sigma_n+X_{n+1}^2|\cF_n] = \Sigma_n + \frac{b}{n} \Sigma_n= \beta_n \Sigma_n,
\label{CM1SIG}
\end{eqnarray}
where
\begin{equation*}
\alpha_n=1+\frac{a}{n}
\hspace{1cm}\text{and}\hspace{1cm}
\beta_n=1+\frac{b}{n}.
\end{equation*}
Let $(a_n)$ and $(b_n)$ be the two sequences defined by $a_1=1$, $b_1=1$ and for $n\geq 2$,
\begin{equation}
\label{DEFABN}
a_n=\prod_{k=1}^{n-1}\alpha_k^{-1}=\frac{\Gamma(n)\Gamma(a+1)}{\Gamma(n+a)}
\hspace{1cm}\text{and}\hspace{1cm}
b_n=\prod_{k=1}^{n-1}\beta_k^{-1}=\frac{\Gamma(n)\Gamma(b+1)}{\Gamma(n+b)}.
\end{equation}
Let $(M_n)$ and $(N_n)$ be the two sequences
defined, for all $n \geq 1$, by 
\begin{equation}
\label{DEFMN}
M_n=a_nS_n
\hspace{1cm}\text{and}\hspace{1cm}
N_n=b_n\Sigma_n.
\end{equation}
Since $a_n= \alpha_n a_{n+1}$ and
$b_n=\beta_n b_{n+1}$, we immediately deduce from \eqref{CM1SUM} and \eqref{CM1SIG} that 
$\dE[M_{n+1}|\cF_n]=M_n$ and $\dE[N_{n+1}|\cF_n]=N_n$ a.s.
It means that $(M_n)$ and $(N_n)$ are two discrete-time martingale sequences. 
We shall see below that the asymptotic behavior of $(M_n)$ is totally different from that of $(N_n)$.
The martingales $(M_n)$ and $(N_n)$ can be rewritten in the additive form
\begin{equation} 
\label{NEWDEFMN}
M_n=\sum_{k=1}^n a_k \varepsilon_k
\hspace{1cm}\text{and}\hspace{1cm}
N_n=\sum_{k=1}^n b_k \xi_k
\end{equation}
where the martingale increments $\varepsilon_n=S_{n}-\alpha_{n-1} S_{n-1}$
and $\xi_n=\Sigma_{n}-\beta_{n-1} \Sigma_{n-1}$.
The predictable quadratic variation associated with $(M_n)$ is given by
$\langle M \rangle_0=0$ and, for all $n \geq 1$,
\vspace{-1ex}
\begin{equation} 
\label{IPM}
\langle M \rangle_n = \sum_{k=1}^n a_k^2\dE[\varepsilon_k^2| \mathcal{F}_{k-1}].
\end{equation}
It follows from \eqref{POSERWS} together with \eqref{CM12X} that
\begin{eqnarray}
\dE[S_{n+1}^2|\cF_n]&=&\dE[S_{n}^2+2S_n X_{n+1} +X_{n+1}^2|\cF_n]=S_n^2+\frac{2a}{n}S_n^2 +b\frac{\Sigma_n}{n}  \hspace{1cm} \text{a.s.} 
\notag\\
&=&b\frac{\Sigma_n}{n} +(2\alpha_n -1) S_n^2  \hspace{1cm} \text{a.s.}
\label{CM2SUM}
\end{eqnarray}
Hence, as $\dE[\varepsilon^2_{n+1}|\cF_n]= \dE[S_{n+1}^2|\cF_{n}] - \alpha_{n}^2 S_{n}^2$, 
we obtain from \eqref{CM2SUM} that 
\begin{equation} 
\dE[\varepsilon^2_{n+1}|\cF_n]
=b\frac{\Sigma_n}{n} -(1-\alpha_n)^2 S_n^2=b\frac{\Sigma_n}{n} - \Big(\frac{aS_n}{n}\Big)^2 \hspace{1cm} \text{a.s.} 
\label{CM2EPS}
\end{equation}
which ensures that
\begin{equation}
\label{SUPEPS2}
\sup_{n\geq 0} \dE[\varepsilon^2_{n+1}|\cF_n]\leq b \hspace{1cm} \text{a.s.}
\end{equation}
Therefore, we deduce from \eqref{IPM} and \eqref{CM2EPS} that $\langle M \rangle_n$ can be splitted into two terms

\begin{equation}
\label{CALCIPM}
\langle M \rangle_n = 1+ b V_n - a^2 W_n
\end{equation}
where
$$
V_n = \sum_{k=1}^{n-1} a_{k+1}^2 \Big(\frac{\Sigma_k}{k}\Big)
\hspace{1cm}\text{and}\hspace{1cm}
W_n = \sum_{k=1}^{n-1} a_{k+1}^2 \Big(\frac{S_k}{k}\Big)^2.
$$
The main difficulty arising here is that the sequence $(V_n)$, even properly normalized, will not converge to a deterministic value but to
a random variable. It has not been taken into account in \cite{Gonzalez2021, Gut2019}, leading to erroneous results.
Hereafter, we shall focus our attention on the asymptotic behavior of the sequence $(\Sigma_n)$, 
in order to establish the asymptotic behavior of the second martingale $(N_n)$.

\begin{lem}
\label{L-MARTN}
The martingale $(N_n)$ is bounded in $\dL^m$ for any integer $m\geq 1$. 
More precisely, for all $n\geq 1$ and for any integer $m\geq 1$, 
\begin{equation}
\label{MOMN}
\dE[N_n^m] \leq m!.
\end{equation}
Consequently, $(N_n)$ converges almost surely and in $\dL^m$ to a finite random variable $N$ satisfying
for any integer $m\geq 1$, 
\begin{equation}
\label{CALCMOMN}
\dE[N^m] =\frac{m!(\Gamma(b+1))^m}{\Gamma(1+mb)}.
\end{equation}
\end{lem}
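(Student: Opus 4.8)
The plan is to establish the moment bound \eqref{MOMN} by induction on $m$, using the recursive structure of the martingale $(N_n)$, and then to transfer these uniform bounds to the almost sure and $\dL^m$ limit. First I would record the basic recursion coming from \eqref{DEFMN} and \eqref{CM1SIG}: since $N_{n+1}=b_{n+1}\Sigma_{n+1}$ and $b_{n+1}=\beta_n b_n$, writing $\Sigma_{n+1}=\beta_n\Sigma_n+\eta_{n+1}$ with the centered increment $\eta_{n+1}=X_{n+1}^2-\tfrac{b}{n}\Sigma_n$, one has $N_{n+1}=N_n+b_{n+1}\eta_{n+1}$. Because $X_{n+1}^2\in\{0,1\}$, the increment $\eta_{n+1}$ is bounded, which is exactly the feature that makes the $\dL^m$ analysis of $(N_n)$ tractable, in sharp contrast to $(M_n)$.

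The core of the argument is the induction. Assume $\dE[N_n^m]\leq m!$ for all $n$ and for all orders up to $m-1$. To handle order $m$, I would expand
\begin{equation*}
\dE[N_{n+1}^m\mid\cF_n]=\sum_{j=0}^{m}\binom{m}{j}N_n^{m-j}\,b_{n+1}^{\,j}\,\dE[\eta_{n+1}^{\,j}\mid\cF_n],
\end{equation*}
and control each conditional moment $\dE[\eta_{n+1}^{\,j}\mid\cF_n]$ using the boundedness of $X_{n+1}^2$ and the identity $\dE[X_{n+1}^2\mid\cF_n]=b\Sigma_n/n$ from \eqref{CM12X}. The $j=0$ term reproduces $N_n^m$ and the $j=1$ term vanishes by the martingale property; the remaining terms are lower order in $N_n$ and carry extra factors of $b_{n+1}$, which decay like $n^{-b}$. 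Taking expectations and summing the resulting telescoping inequality over $n$, the decay of $b_{n+1}^{\,j}$ together with the induction hypothesis on the lower moments should yield a uniform bound of the announced form $\dE[N_n^m]\leq m!$. The cleanest route is likely to verify the sharp constant $m!$ directly by computing the \emph{exact} expectation $\dE[N_{n+1}^m]$ as an affine recursion in $n$ and checking that the factorial bound is preserved; I would keep a close eye on matching the constant rather than merely proving boundedness.

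Once \eqref{MOMN} holds, the martingale $(N_n)$ is bounded in $\dL^m$ for every $m$, hence in particular $\dL^2$-bounded, so by the martingale convergence theorem it converges almost surely and in $\dL^1$ to a finite random variable $N$. Uniform integrability of $(N_n^m)$, which follows from the $\dL^{m+1}$-boundedness just established, upgrades this to convergence in $\dL^m$ for every $m\geq 1$. To pin down the limiting moments \eqref{CALCMOMN}, I would pass to the limit in the exact moment recursion: by \eqref{DEFMN} and \eqref{DEFABN}, $N_n=b_n\Sigma_n$ with $b_n\sim\Gamma(b+1)\,n^{-b}$, and since $b=p+q=1-r$ one has $b_n\Sigma_n=\Gamma(b+1)\,n^{-(1-r)}\Sigma_n(1+o(1))$. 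Invoking Lemma \ref{L-ML}, $n^{-(1-r)}\Sigma_n\to\Sigma$ where $\Sigma$ is Mittag-Leffler with parameter $1-r$, so $N=\Gamma(b+1)\,\Sigma$. The moment formula then follows from \eqref{MLMOM}: $\dE[N^m]=(\Gamma(b+1))^m\dE[\Sigma^m]=(\Gamma(b+1))^m\,m!/\Gamma(1+m(1-r))$, which is exactly \eqref{CALCMOMN} since $b=1-r$.

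The main obstacle I anticipate is getting the sharp constant $m!$ in the induction rather than a weaker $C_m$ bound: the naive summation of the error terms produces constants that could grow faster than factorially unless one exploits the precise cancellation in the recursion and the exact size of $b_{n+1}$. I would therefore aim to derive and solve the \emph{exact} recurrence for $\dE[N_n^m]$ in closed form (or at least a tight one-step inequality $\dE[N_{n+1}^m]\leq(1+c_{n,m})\dE[N_n^m]+\text{lower order}$ whose product over $n$ stays bounded by $m!$), treating the bookkeeping of the binomial coefficients and the Pochhammer-type products in $b_n$ with care. The consistency check that \eqref{CALCMOMN} with $b=1-r$ agrees with $(\Gamma(2-r))^m$ times the Mittag-Leffler moments of Lemma \ref{L-ML} provides a reassuring sanity test on the constants throughout.
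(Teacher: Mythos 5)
There is a genuine gap here, and it is structural: your derivation of \eqref{CALCMOMN} is circular. To compute $\dE[N^m]$ you invoke Lemma \ref{L-ML} and the Mittag-Leffler moment formula \eqref{MLMOM}, i.e.\ the fact that $n^{-(1-r)}\Sigma_n$ converges almost surely to a Mittag-Leffler random variable $\Sigma$. But in the paper's logical structure Lemma \ref{L-ML} is a \emph{consequence} of Lemma \ref{L-MARTN}: the limit $\Sigma=N/\Gamma(2-r)$ exists because $(N_n)$ converges, and its law is identified as Mittag-Leffler precisely by feeding the moment formula \eqref{CALCMOMN} into Carleman's condition. There is no independent source for that identification (the earlier work \cite{Gut2021} proves convergence of $n^{-(1-r)}\Sigma_n$ but does not determine the limit law), so \eqref{MLMOM} cannot be used as an input to this lemma. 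The limiting moments must be computed from scratch, and this is the paper's key device, which is missing from your proposal: the rising-factorial (Pochhammer) moments $\dE[\Sigma_n^{(m)}]=\dE[\Sigma_n(\Sigma_n+1)\cdots(\Sigma_n+m-1)]$. Since $X_{n+1}^2\in\{0,1\}$, one has $\dE[(X_{n+1}^2)^{(k)}|\cF_n]=k!\,b\Sigma_n/n$, and the identity $aa^{(k)}=a^{(k+1)}-ka^{(k)}$ makes all cross terms telescope, so that the Pochhammer moments satisfy the \emph{exact homogeneous} recursion $\dE[\Sigma_{n+1}^{(m)}]=\bigl(1+\tfrac{mb}{n}\bigr)\dE[\Sigma_n^{(m)}]$, with closed-form solution $\dE[\Sigma_n^{(m)}]=m!\,\Gamma(n+mb)/(\Gamma(n)\Gamma(1+mb))$. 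Expanding $\Sigma_n^{(m)}$ back into ordinary powers via the unsigned Stirling numbers of the first kind and using $b_n\to 0$ then yields $\lim_n\dE[N_n^m]=m!(\Gamma(b+1))^m/\Gamma(1+mb)$ with no reference to Lemma \ref{L-ML}; the logical arrow goes from this formula \emph{to} the Mittag-Leffler identification, not the other way.

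The first half of your argument has a second, self-acknowledged gap: the sharp constant $m!$ in \eqref{MOMN} is part of the statement, and your binomial expansion of the ordinary moments does not deliver it. Your one-step inequality is sound — the centered conditional moments of $\eta_{n+1}$ carry a factor $\Sigma_n/n$, so with the induction hypothesis the remainder at step $n$ is $O(n^{-1-b})$ and summable — but summing it only produces some finite constant $C_m$, and nothing in that bookkeeping forces $C_m\leq m!$ (already for $m=2$ the naive sum can exceed $2$). In the paper the sharp constant comes for free from the Pochhammer route: $\Sigma_n^m\leq\Sigma_n^{(m)}$ pointwise, hence $\dE[N_n^m]\leq b_n^m\dE[\Sigma_n^{(m)}]=m!\,b_n(m)b_n^m\leq m!$ by the elementary product inequality \eqref{NP7}. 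A $C_m$ bound would still give the almost sure and $\dL^m$ convergence of $(N_n)$ — that part of your argument is correct — but it proves neither \eqref{MOMN} as stated nor, because of the circularity above, \eqref{CALCMOMN}.
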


\begin{proof}
In order to prove \eqref{MOMN}, we are going to compute the Pochhammer moments of the random variable $\Sigma_n$, also known as the rising factorial moments of $\Sigma_n$, defined by
$\dE[\Sigma_n^{(1)}]=\dE[\Sigma_n]$ and for any integer $m\geq 1$,
$$
\dE[\Sigma_n^{(m)}]=\dE[\Sigma_n(\Sigma_n+1)\cdots(\Sigma_n+m-1)].
$$
One can observe that $\Sigma_n$ is always smaller than $n$ which means that 
$\dE[\Sigma_n^{(m)}] \leq n^{(m)}$ where, for any $a \in \dR$, $a^{(m)}=a(a+1)\cdots(a+m-1)$ stands for the Pochhammer symbol of $a$
with $a^{(0)}=1$.
We recall that Pochhammer symbols are Sheffer sequences of binomial type satisfying, for any $a,b \in \dR$, the nice identity
$$
(a+b)^{(m)}=\sum_{k=0}^m
\begin{pmatrix}
m \\ k
\end{pmatrix}
a^{(k)}b^{(m-k)}.
$$
Consequently, for any integer $m \geq 1$,
\begin{equation}
\label{NP1}
\Sigma_{n+1}^{(m)}=(\Sigma_{n}+X_{n+1}^2)^{(m)}=\sum_{k=0}^m
\begin{pmatrix}
m \\ k
\end{pmatrix}
\Sigma_n^{(k)}(X_{n+1}^2)^{(m-k)}.
\end{equation}
By taking the conditional expectation on both sides of \eqref{NP1}, we obtain that
\begin{equation}
\label{NP2}
\dE[\Sigma_{n+1}^{(m)}| \cF_n]=\sum_{k=0}^m
\begin{pmatrix}
m \\ k
\end{pmatrix}
\Sigma_n^{(k)}\dE[ (X_{n+1}^2)^{(m-k)}| \cF_n].
\end{equation}
However, it follows from \eqref{STEPSERWS} that for all $k\geq 1$,
\begin{eqnarray}
\dE[ (X_{n+1}^2)^{(k)}| \cF_n]&=&\dE[X_{n+1}^2(X_{n+1}^2+1) \cdots (X_{n+1}^2+k-1)|\cF_n], \notag\\
&=&1^{(k)}\frac{b}{n}\Sigma_n=k!\frac{b}{n}\Sigma_n \hspace{1cm} \text{a.s.}
\label{NP3}
\end{eqnarray}
Therefore, we deduce from \eqref{NP2} and \eqref{NP3} that for any integer $m \geq 1$,
\begin{eqnarray*}
\dE[\Sigma_{n+1}^{(m)}| \cF_n]&=&\Sigma_n^{(m)}+\sum_{k=0}^{m-1}
\begin{pmatrix}
m \\ k
\end{pmatrix}
\Sigma_n^{(k)} (m-k)!\frac{b}{n}\Sigma_n \hspace{1cm} \text{a.s.}  \\
&=&\Sigma_n^{(m)}+m! \frac{b}{n} \sum_{k=0}^{m-1}
\frac{1}{k!} \Sigma_n
\Sigma_n^{(k)} \hspace{1cm} \text{a.s.} 
\end{eqnarray*}
leading to
\begin{equation}
\label{NP4}
\dE[\Sigma_{n+1}^{(m)}]=\dE[\Sigma_n^{(m)}]+m! \frac{b}{n} \sum_{k=0}^{m-1}
\frac{1}{k!} \dE[\Sigma_n\Sigma_n^{(k)}].
\end{equation}
From now on, the Pochhammer moments will play a prominant role.
It is easy to see that for all $k\geq 0$ and for any $a \in \dR$, $a a^{(k)}=a^{(k+1)}-ka^{(k)}$.
It clearly implies that $ \dE[\Sigma_n\Sigma_n^{(k)}]=  \dE[\Sigma_n^{(k+1)}]  -k\dE[\Sigma_n^{(k)}]$.
Hence, we obtain from \eqref{NP4} together with a standard telescoping argument that
\begin{eqnarray}
\dE[\Sigma_{n+1}^{(m)}]&=&\dE[\Sigma_n^{(m)}]+m! \frac{b}{n}\Bigl( 
\dE[\Sigma_n]+\sum_{k=1}^{m-1}
\frac{1}{k!} \dE[\Sigma_n^{(k+1)}] - \frac{1}{(k-1)!} \dE[\Sigma_n^{(k)}]\Bigr), \notag \\
&=& \dE[\Sigma_n^{(m)}]+m! \frac{b}{n}\Bigl( 
\dE[\Sigma_n] + \frac{1}{(m-1)!} \dE[\Sigma_n^{(m)}] - \dE[\Sigma_n] \Bigr),\notag \\
&=& \Bigl(1+ \frac{mb}{n}\Bigr) \dE[\Sigma_n^{(m)}]. 
\label{NP5}
\end{eqnarray}
Furthermore, one can observe that $\Sigma_1=X_1^2=1$, which means that $\dE[ \Sigma_1^{(m)}]=m!$. Consequently, equation \eqref{NP5} leads, for all $n\geq 2$ and for any integer $m \geq 1$, to
\begin{equation}
\label{NP6}
\dE[\Sigma_{n}^{(m)}]=\prod_{k=1}^{n-1}  \Bigl(1+ \frac{mb}{k}\Bigr) \dE[ \Sigma_1^{(m)}]= \frac{m! \Gamma(n+mb)}{\Gamma(n) \Gamma(1+mb)}.
\end{equation}
For all $m \geq 1$, denote
\begin{equation}
\label{DEFBM}
b_n(m)=\frac{\Gamma(n+mb)}{\Gamma(n) \Gamma(1+mb)}.
\end{equation}
One can obviously see that $b_n(1)b_n=1$ where the sequence $(b_n)$ was previously defined in \eqref{DEFABN}. Moreover,
it is easy to see from \eqref{DEFABN} that for all $n \geq 2$ and $m \geq 1$,
\begin{equation}
\label{NP7}
b_n(m)  b_n^m= \prod_{k=1}^{n-1}  \Bigl(\frac{k+mb}{k}\Bigr) \Bigl(\frac{k}{k+b}\Bigr)^m \leq 1. 
\end{equation}
%
%
Since $N_n=b_n \Sigma_n$, we deduce from
\eqref{NP6} and \eqref{NP7} that for all $n \geq 2$ and $m \geq 1$,
\begin{equation}
\label{NP8}
\dE[N_n^m]=b_n^m  \dE[\Sigma_{n}^m] \leq b_n^m \dE[\Sigma_{n}^{(m)}]= b_n(m)  b_n^m m! \leq m!
\end{equation}
which is exactly inequality \eqref{MOMN}. Therefore, we immediately obtain from \eqref{NP8} that
the martingale $(N_n)$ is bounded in $\dL^m$ for any integer $m\geq 1$. Then, it follows from Doob's martingale convergence
Theorem, see e.g. Corollary 2.2 in \cite{Hall1980}, that $(N_n)$ converges almost surely and in $\dL^m$ 
to a finite random variable $N$. It only remains to calculate all the moments of the limiting random variable $N$.
For any $a \in \dR$ and for any integer $m\geq 1$, we have the very nice identity
\begin{equation}
\label{STIRLING}
a^{(m)}=\sum_{k=0}^m 
\Bigl[ 
\begin{array}{c}
m \\
k
\end{array}
\Bigr]
a^k
\end{equation} 
where 
$\Bigl[ 
\begin{array}{c}
m \\
k
\end{array}
\Bigr]$ 
stands for the unsigned Stirling numbers of the first kind. We have
$$
\Bigl[ 
\begin{array}{c}
0 \\
0
\end{array}
\Bigr]=1, \hspace{1cm}
\Bigl[ 
\begin{array}{c}
m \\
0
\end{array}
\Bigr]=
\Bigl[ 
\begin{array}{c}
0 \\
m
\end{array}
\Bigr]=0, \hspace{1cm}
\Bigl[ 
\begin{array}{c}
m \\
m
\end{array}
\Bigr]=1.
$$
Moreover, the unsigned Stirling numbers of the first kind can be computed by the recurrence relation
which holds for all $m \geq 1$ and $k \geq 1$,
$$
\Bigl[ 
\begin{array}{c}
m+1 \\
k
\end{array}
\Bigr]
=m
\Bigl[ 
\begin{array}{c}
m \\
k
\end{array}
\Bigr]
+
\Bigl[ 
\begin{array}{c}
m \\
k-1
\end{array}
\Bigr].
$$
We deduce from \eqref{STIRLING} that for any integer $m \geq 1$
\begin{equation*}
\dE[ \Sigma_{n}^{(m)}]=\dE[\Sigma_n^m] + \sum_{k=1}^{m-1} 
\Bigl[ 
\begin{array}{c}
m \\
k
\end{array}
\Bigr]
\dE[\Sigma_n^k]
\end{equation*}
leading to
\begin{equation}
\label{NP9}
\dE[b_n^m \Sigma_{n}^{(m)}]=\dE[N_n^m] + \sum_{k=1}^{m-1} 
\Bigl[ 
\begin{array}{c}
m \\
k
\end{array}
\Bigr]
b_n^{m-k} \dE[N_n^k].
\end{equation}
Hence, it follows from the elementary fact that $b_n$ goes to zero, together with \eqref{NP6} and \eqref{NP9} as well as
standard results on the asymptotic behavior of the Euler Gamma function that
\begin{eqnarray}
\lim_{n \rightarrow \infty} \dE[N_n^m] &=& \lim_{n \rightarrow \infty} \dE[b_n^m \Sigma_{n}^{(m)}]= 
\lim_{n \rightarrow \infty} \Bigl(\frac{m!\Gamma(n+mb)}{\Gamma(n) \Gamma(1+mb)}\Bigr) \Bigl(\frac{\Gamma(n)\Gamma(b+1)}{\Gamma(n+b)}\Bigr)^m, \notag \\
&=&  \frac{m!(\Gamma(b+1))^m}{\Gamma(1+mb)} \lim_{n \rightarrow \infty} \Bigl(\frac{\Gamma(n+mb)}{\Gamma(n)}\Bigr) \Bigl(\frac{\Gamma(n)}{\Gamma(n+b)}\Bigr)^m, \notag \\
&=&  \frac{m!(\Gamma(b+1))^m}{\Gamma(1+mb)}
\label{NP10}
\end{eqnarray}
which is exactly what we wanted to prove.
\end{proof}

\noindent
{\bfseries Proof of Lemma \ref{L-ML}}. The proof of the almost sure convergence \eqref{ASCVGML} 
immediately follows from Lemma 
\ref{L-MARTN} together with the identity $N_n=b_n \Sigma_n$ and the elementary fact that 
$b=1-r$ as well as
\begin{equation}
\label{LIMBN}
\lim_{n \rightarrow \infty} n^b b_n=\lim_{n \rightarrow \infty} n^b 
\Big(\frac{\Gamma(n)\Gamma(b+1)}{\Gamma(n+b)}\Big)=\Gamma(b+1).
\end{equation}
Furthermore, it follows \eqref{MLMOM} that for all integer $m\geq 1$,
$$
\dE[\Sigma^m]=\frac{1}{( \Gamma(2-r))^m}\dE[N^m]=\frac{m!}{\Gamma(1+m(1-r))}.
$$
We recognize the moments of the Mittag-Leffler distribution given in \eqref{DEFMLMOM}.
Finally, as the Mittag-Leffler distribution is characterized by its moments, we can conclude that 
$\Sigma$ has a Mittag-Leffler distribution with parameter $1-r$,  which completes the proof of
Lemma \ref{L-ML}.
\demend
\vspace{-2ex} \\
\noindent
We are now in position where we can properly investigate the asymptotic behavior of the first martingale
$(M_n)$. It is closely related to that of the sequence $(v_n)$ defined, for all $n \geq 1$, by
$$
v_n=\sum_{k=1}^{n} \frac{a_{k}^2}{kb_k}.
$$
As a matter of fact, we already saw in Section \ref{S-I} that the location of the memory parameter
given by \eqref{MEMPAR} plays a crucial role. It is easy to see that
$$
p_r<\frac{3}{4} \Longleftrightarrow 2a< 1-r, 
\hspace{0.5cm}
p_r=\frac{3}{4} \Longleftrightarrow 2a= 1-r,
\hspace{0.5cm}
p_r>\frac{3}{4} \Longleftrightarrow 2a> 1-r.
$$
Moreover, the asymptotic behavior of $(v_n)$ in the three regimes is as follows. 
In the diffusive regime where $2a < 1-r$, that is $2a<b$, we have from \eqref{DEFABN} together with the well known 
asymptotic behavior of the Euler Gamma function that
\begin{equation}
\label{LIMVNDR}
\lim_{n \rightarrow \infty} \frac{v_n}{n^{1-r-2a}}= \lim_{n \rightarrow \infty} 
\frac{1}{n^{b-2a}} \sum_{k=1}^n \frac{1}{k} \Bigl(\frac{\Gamma(k)\Gamma(a+1)}{\Gamma(k+a)}\Bigr)^2
\Bigl(\frac{\Gamma(k+b)}{\Gamma(k)\Gamma(b+1)}\Bigr)
= \ell_r
\end{equation}
where
$$
\ell_r=\frac{1}{1-r-2a}\frac{(\Gamma(a+1))^2}{\Gamma(2-r)}.
$$
Hence, it follows from \eqref{ASCVGML} together with Toeplitz's lemma that
\begin{equation}
\label{CVGVNDR}
\lim_{n \rightarrow \infty} \frac{1}{n^{1-r-2a}}V_n
= \ell_r \Gamma(2-r)\Sigma  \hspace{1cm} \text{a.s.}
\end{equation}
In the critical regime where $2a=1-r$, that is $2a=b$, we obtain once again from \eqref{DEFABN} that
\begin{equation}
\label{LIMVNCR}
\lim_{n \rightarrow \infty} \frac{v_n}{\log n}= \lim_{n \rightarrow \infty} 
\frac{(\Gamma(a+1))^2}{\Gamma(b+1)} \frac{1}{\log n} \sum_{k=1}^n \frac{1}{k} = \frac{(\Gamma(a+1))^2}{\Gamma(2-r)} 
\end{equation}
which implies from \eqref{ASCVGML} and Toeplitz's lemma that
\begin{equation}
\label{CVGVNCR}
\lim_{n \rightarrow \infty} \frac{1}{\log n}V_n
= (\Gamma(a+1))^2 \Sigma  \hspace{1cm} \text{a.s.}
\end{equation}
In the superdiffusive regime where $2a>1-r$, 
$(v_n)$ converges to the finite value
\begin{equation} 
\label{LIMVNSR}
\lim_{n \rightarrow \infty}  v_n \!=\!
\sum_{k=0}^\infty \! \Big(\frac{\Gamma(a+1)\Gamma(k+1)}{\Gamma(k+a+1)} \Big)^2
\frac{\Gamma(k+b+1)}{\Gamma(k+2)\Gamma(b+1)}  \!=\! 
{}_{4}F_3 \Bigl( \begin{matrix}
{\hspace{0.1cm}1 , 1 , 1 ,2-r}\\
{2,a+1,a+1}\end{matrix} \Bigl|
{\displaystyle 1}\Bigr)
\end{equation} 
where $\!{}_{4}F_3$ stands for the hypergeometric function defined, for all $z \in \dC$, by
\begin{equation}
{}_{4}F_3 \Bigl( \begin{matrix}
{a,b,c,d}\\
{e,f,g}\end{matrix} \Bigl|
{\displaystyle z}\Bigr)
=\sum_{k=0}^{\infty}
\frac{a^{(k)}\, b^{(k)}\, c^{(k)} \,d^{(k)}}
{e^{(k)}\,f^{(k)}\,g^{(k)}\, k!} z^k.
\notag
\end{equation}
Therefore, we obtain from \eqref{ASCVGML} that $(V_n)$ converges 
almost surely to a finite and almost surely positive random variable $V$.
We shall see in the appendices that the above convergences will play a prominent role in order 
to investigate the asymptotic behavior of the ERWS.


\vspace{-1ex}
\section*{Appendix A \\ Proofs in the diffusive regime}
\renewcommand{\thesection}{\Alph{section}}
\renewcommand{\theequation}{\thesection.\arabic{equation}}
\setcounter{section}{1}
\setcounter{equation}{0}
\label{S-A}


\subsection{Almost sure convergence.}
We start with the proof of the almost sure convergence in the diffusive regime where $2a<1-r$.

\ \vspace{-1ex}\\
\noindent{\bfseries Proof of Theorem \ref{T-ASCVG-DR}.}
We obtain from the decomposition \eqref{CALCIPM} together with \eqref{CVGVNDR} that
$$
\langle M \rangle_n = O(n^{1-r-2a}) \hspace{1cm} \text{a.s.} 
$$
Then, it follows from the strong law of large numbers for martingales given e.g. by the last part of Theorem
1.3.24 in \cite{Duflo1997} that $M_n^2= O( \langle M \rangle_n \log \langle M \rangle_n )$ a.s. which immediately
implies that
\begin{equation}
\label{MNDR}
M_n^2=O(n^{1-r-2a} \log n) \hspace{1cm} \text{a.s.} 
\end{equation}
Consequently, as $M_n = a_nS_n$ and
\begin{equation}
\label{LIMAN}
\lim_{n \rightarrow \infty} n^a a_n=\lim_{n \rightarrow \infty} n^a 
\Big(\frac{\Gamma(n)\Gamma(a+1)}{\Gamma(n+a)}\Big)=\Gamma(a+1),
\end{equation}
we deduce from \eqref{MNDR} and \eqref{LIMAN} that
$S_n^2=O(n^{1-r} \log n)$ a.s. 
leading to
\begin{equation*}
\lim_{n \rightarrow \infty} \frac{S_n}{n}
= 0  \hspace{1cm} \text{a.s.}
\end{equation*}
One can also observe that we obtain from the identity $S_n^2=O(n^{1-r} \log n)$ that
\begin{equation}
\label{PRATEDR}
\lim_{n \rightarrow \infty} n^{r}\Big(\frac{S_n}{n}\Big)^2 = 0 \hspace{1cm}\text{a.s.}
\end{equation}
which will be useful in Section A.2.
\demend

\vspace{-2ex}
\subsection{Law of iterated logarithm.}
In order to prove the law of iterated logarithm in the diffusive regime, we shall first proceed to the calculation of $\dE[S_n^2]$ and $\dE[\langle M \rangle_n]$. We already saw from 
\eqref{NP6} with $m=1$ that for all $n \geq 1$,
\begin{equation}
\label{MOM1SIGMA}
\dE[\Sigma_n]=\frac{1}{b_n}.
\end{equation}
Moreover,
\eqref{CM2SUM} can be rewritten as
\begin{equation}
\dE[S_{n+1}^2|\cF_n]=\Big(1+ \frac{2a}{n} \Big)S_n^2
+b\frac{\Sigma_n}{n}  \hspace{1cm} \text{a.s.}
\label{PLILDR1}
\end{equation}
Hence, by taking the expectation on both sides of \eqref{PLILDR1}, we obtain that for all $n \geq 1$,
$$
\dE[S_{n+1}^2]=\Big(1+ \frac{2a}{n} \Big)\dE[S_n^2]+
+\frac{b}{nb_n}
$$
which leads to
\begin{eqnarray*}
\dE[S_n^2] & = &\frac{\Gamma(n+2a)}{\Gamma(n)\Gamma(2a +1)}\Big(1+ b 
\sum_{k=1}^{n-1} \frac{\Gamma(k+b)}{ \Gamma(k+1)\Gamma(b+1)}
\frac{\Gamma(k+1)\Gamma(2a +1)}{\Gamma(k+1+2a)}\Big), \\
& = &\frac{\Gamma(n+2a)}{\Gamma(n)\Gamma(2a +1)}\Big(1+ \frac{ \Gamma(2a +1)}{\Gamma(b)}
\sum_{k=1}^{n-1} \frac{\Gamma(k+b)}{\Gamma(k+1+2a)}\Big), \\
& = &
\frac{\Gamma(n+2a)}{\Gamma(n) \Gamma(b)}\sum_{k=1}^n \frac{\Gamma(k+b-1)}{\Gamma(k+2a)}.
\end{eqnarray*}
However, we deduce from Lemma B.1 in \cite{Bercu2018} that
$$
\sum_{k=1}^n \frac{\Gamma(k+b-1)}{\Gamma(k+2a)}=\frac{1}{b-2a}
\Big( \frac{\Gamma(n+b)}{\Gamma(n+2a)} - \frac{\Gamma(b)}{\Gamma(2a)}\Big).
$$
It clearly implies that for all $n \geq 1$,
\begin{equation}
\dE[S_{n}^2]=\frac{1}{b-2a} \Big(\frac{\Gamma(n+b)}{\Gamma(n) \Gamma(b)}- \frac{\Gamma(n+2a)}{\Gamma(n)\Gamma(2a)}\Big).
\label{MOM2SN}
\end{equation}
Hereafter, it follows from \eqref{CALCIPM} together with \eqref{MOM1SIGMA} and \eqref{MOM2SN}
\begin{eqnarray*}
\dE[\langle M \rangle_n] & = & 1 + b \sum_{k=1}^{n-1} \frac{a_{k+1}^2}{k} \dE[\Sigma_k]
-a^2\sum_{k=1}^{n-1} \frac{a_{k+1}^2}{k^2} \dE[S_k^2], \\
& = & b \sum_{k=1}^{n-1} \frac{a_{k+1}^2}{kb_k} +R_n
\end{eqnarray*}
where
$$
R_n = 1 +\frac{a^2}{b-2a}\Big(
\sum_{k=1}^{n-1} \frac{a_{k+1}^2\Gamma(k+2a)}{k^2\Gamma(k)\Gamma(2a)}
-b\sum_{k=1}^{n-1} \frac{a_{k+1}^2}{k^2b_k} \Big).
$$
We obtain from the well known asymptotic behavior of the Euler Gamma function that
$R_n=o(v_n)$ which ensures via \eqref{LIMVNDR} that
\begin{equation}
\label{CVGMEANIPMN}
\lim_{n \rightarrow \infty} \frac{1}{n^{1-r-2a}}\dE[\langle M \rangle_n]=(1-r) \ell_r.
\end{equation}

\ \vspace{-2ex}\\
\noindent{\bfseries Proof of Theorem \ref{T-LIL-DR}.}
We are now in position to prove the law of iterated logarithm for the martingale $(M_n)$
using Theorem 1 and Corollary 2 in \cite{Heyde1977}. First of all, we claim that
\begin{equation}
\label{CVGIPMNDR}
\lim_{n \rightarrow \infty} \frac{1}{n^{1-r-2a}}\langle M \rangle_n= (\Gamma(a+1))^2 \sigma_r^2 \Sigma
\hspace{1cm}\text{a.s.}
\end{equation}
where the asymptotic variance $\sigma_r^2$ is given by \eqref{VARDR}. As a matter of fact, we already saw
from \eqref{CVGVNDR} that
\begin{equation}
\label{PLILDR0}
\lim_{n \rightarrow \infty} \frac{1}{n^{1-r-2a}}V_n= \frac{(\Gamma(a+1))^2}{b} \sigma_r^2 \Sigma
\hspace{1cm}\text{a.s.}
\end{equation}
In addition, we obtain from \eqref{PRATEDR} together with Toeplitz's lemma that
\begin{equation}
\label{PLILDR2}
\lim_{n \rightarrow \infty} \frac{1}{n^{1-r-2a}}W_n
=\lim_{n \rightarrow \infty} \frac{(\Gamma(a+1))^2}{n^{1-r-2a}}
\sum_{k=1}^{n-1} \frac{1}{k^{r+2a}} k^{r}\Big(\frac{S_k}{k}\Big)^2
= 0  \hspace{1cm} \text{a.s.}
\end{equation}
Consequently, \eqref{CVGIPMNDR} follows from the conjunction of \eqref{CALCIPM}, 
\eqref{PLILDR0} and \eqref{PLILDR2}. Next, we are going to prove that
\begin{equation}
\label{}
\sum_{n=1}^\infty \frac{1}{n^{2(1-r-2a)}} \dE[|\Delta M_n|^4]  < \infty
\end{equation}
where $\Delta M_n=M_n-M_{n-1}=a_n \varepsilon_n$. Since $S_{n+1}=S_n+X_{n+1}$,
we get from \eqref{POSERWS} together with \eqref{CM12X} that
\begin{eqnarray}
\dE[S_{n+1}^3|\cF_n] &=&
\Big(1+\frac{3a}{n} \Big) S_n^3+\frac{3b}{n}S_n\Sigma_n +\frac{a}{n} S_n  \hspace{1cm} \text{a.s.} 
\label{CM3SUM} \\
\dE[S_{n+1}^4|\cF_n] &=&
\Big(1+\frac{4a}{n} \Big) S_n^4+\frac{6b}{n}S_n^2\Sigma_n +\frac{4a}{n} S_n^2 +b\frac{\Sigma_n}{n}  \hspace{1cm} \text{a.s.} 
\label{CM4SUM}
\end{eqnarray}
Hence, as $\varepsilon_{n+1}=S_{n+1} - \alpha_n S_n$, it follows from \eqref{CM1SUM}, \eqref{CM2SUM}, \eqref{CM3SUM} and \eqref{CM4SUM}
together with straightforward calculations that
\begin{eqnarray}
\dE[\varepsilon_{n+1}^3|\cF_n] &=&
 \frac{a S_n}{n}\Big(1 + 2\Big(\frac{aS_n}{n}\Big)^2 -\frac{3b\Sigma_n}{n}\Big) \hspace{1cm} \text{a.s.} 
\label{CM3EPS} \\
\dE[\varepsilon_{n+1}^4|\cF_n] &=& \frac{b \Sigma_n}{n}\Big(1 + \Big(\frac{aS_n}{n}\Big)^2\Big) -4\Big(\frac{aS_n}{n}\Big)^2
-3\Big(\frac{aS_n}{n}\Big)^4  \hspace{1cm} \text{a.s.} 
\label{CM4EPS}
\end{eqnarray}
Thus, we immediately deduce from \eqref{CM4EPS} that for all $n\geq 1$,
\begin{equation}
\label{MOMEPS4}
\dE[\varepsilon_{n+1}^4|\cF_n] \leq \frac{2b \Sigma_n}{n} \hspace{1cm} \text{a.s.}
\end{equation}
which, thanks to \eqref{MOM1SIGMA}, implies that for all $n\geq 1$,
\begin{equation*}
 \dE[\varepsilon^4_{n+1}]\leq \frac{2b}{nb_n}.
\end{equation*}
However, we obtain from Wendel's inequality for the ratio of two gamma functions that for all
$n \geq 1$,
$$
\frac{\Gamma(n+b)}{\Gamma(n)} \leq n^b
$$
leading, via \eqref{DEFABN}, to
\begin{equation}
\label{MOMEPS5}
 \dE[\varepsilon^4_{n+1}]\leq \frac{2}{\Gamma(b)n^{1-b}}.
\end{equation}
Therefore, as $b=1-r$, we obtain from \eqref{LIMAN} together with \eqref{MOMEPS5} that
\begin{equation}
\label{BSUMDR}
\sum_{n=1}^\infty \frac{1}{n^{2(1-r-2a)}} \dE[|\Delta M_n|^4] 
\leq 
1 +\frac{2}{\Gamma(b)}\sum_{n=1}^\infty \frac{a_{n}^4}{n^{b+1-4a}} <\infty.
\end{equation}
Furthermore, let $(P_n)$ be the martingale defined, for all $n \geq 1$, by
$$
P_n=\sum_{k=1}^n \frac{a_k^2}{k^{b-2a}} (\varepsilon_k^2 - \dE[\varepsilon_k^2 | \cF_{k-1}]).
$$
Its predictable quadratic variation is given by
$$
\langle P \rangle_n = \sum_{k=1}^n \frac{a_k^4}{k^{2(b-2a)}}
(\dE[\varepsilon_k^4| \mathcal{F}_{k-1}]- (\dE[\varepsilon_k^2 | \cF_{k-1}])^2).
$$
Hence, we obtain from \eqref{MOMEPS4} that
\begin{equation*}
\langle P \rangle_n \leq 2b\sum_{k=1}^n \frac{a_{k}^4}{k^{2b+1-4a}}\Sigma_k.
\end{equation*}
Consequently, we deduce from \eqref{ASCVGML} together with \eqref{LIMAN} that
$\langle P \rangle_n$ converges a.s. to a finite random variable. Then, it follows from
the strong law of large numbers for martingales given by the first part of
Theorem 1.3.15 in \cite{Duflo1997} that $(P_n)$ converges a.s. to a finite random variable.
Finally, all the conditions of Theorem 1 and Corollary 2 in \cite{Heyde1977} are satisfied, which leads to the
law of iterated logarithm
\begin{equation}
 \limsup_{n \rightarrow \infty} \frac{M_n}{\sqrt{2 \langle M \rangle_n  \log \log \langle M \rangle_n }}   = 
 -\liminf_{n \rightarrow \infty} \frac{M_n}{\sqrt{2 \langle M \rangle_n  \log \log \langle M \rangle_n }} 
 = 1\hspace{0.5cm} \text{a.s.}
 \label{PLILDR4}
\end{equation}
Therefore, as $M_n=a_n S_n$, we obtain from the almost sure convergence \eqref{ASCVGML} 
together with \eqref{LIMAN}, \eqref{CVGIPMNDR}  and \eqref{PLILDR4} that
\begin{equation}
 \limsup_{n \rightarrow \infty} \frac{S_n}{\sqrt{2 \Sigma_n \log \log \Sigma_n}}   = 
 -\liminf_{n \rightarrow \infty} \frac{S_n}{\sqrt{2 \Sigma_n \log \log \Sigma_n}} 
 = \sigma_r\hspace{1cm} \text{a.s.}
 \label{PLILDR5}
\end{equation}
We also deduce the law of iterated logarithm \eqref{LIL-DR3} from \eqref{ASCVGML} 
and \eqref{PLILDR5}, which completes the proof of
Theorem \ref{T-LIL-DR}. \demend

\vspace{-2ex}
\subsection{Asymptotic normality.}
\ \vspace{1ex}\\
\noindent{\bfseries Proof of Theorem \ref{T-AN-DR}.}
We shall now proceed to the proof of the asymptotic normality for the martingale $(M_n)$
using the first part of Theorem 1 and Corollaries 1 and 2 in \cite{Heyde1977}. We already saw that
\eqref{CVGIPMNDR} holds
and that $(P_n)$ converges almost surely to a finite random variable. It only remains to prove that
that for any $\eta>0$,
\begin{equation}
\lim_{n \rightarrow \infty}
\frac{1}{n^{1-r-2a}}\sum_{k=1}^{n}\dE\big[\Delta M_k^2 \rI_{\{|\Delta M_k|>\eta \sqrt{n^{1-r-2a}} \}}\big]=0.
\label{PANDR1}
\end{equation}
We clearly have for any $\eta>0$,
\begin{equation*}
\frac{1}{n^{1-r-2a}}\sum_{k=1}^{n}\dE\big[\Delta M_k^2 \rI_{\{|\Delta M_k|>\eta \sqrt{n^{1-r-2a}} \}}\big]
\leq 
\frac{1}{\eta^2 n^{2(1-r-2a)}}\sum_{k=1}^{n}\dE\big[\Delta M_k^4\big].
\end{equation*}
However, it was proven in \eqref{BSUMDR} that
\begin{equation*}
\sum_{n=1}^\infty \frac{1}{n^{2(1-r-2a)}} \dE[|\Delta M_n|^4] <\infty.
\end{equation*}
Then, it follows from Kronecker's lemma that
\begin{equation*}
\lim_{n \rightarrow \infty}
\frac{1}{n^{2(1-r-2a)}}\sum_{k=1}^{n}\dE\big[\Delta M_k^4 \big]=0
\end{equation*}
which immediately leads to \eqref{PANDR1}. Consequently, all the conditions of Theorem 1 and Corollaries 1 and 2 in \cite{Heyde1977}
are satisfied, which implies the asymptotic normality
\begin{equation}
\label{PANDR2}
\frac{M_n}{\sqrt{\langle M \rangle_n}} \liml \cN(0, 1).
\end{equation}
Moreover, we also deduce from Theorem 1 in \cite{Heyde1977} that
\begin{equation}
\label{PANDR3}
\frac{M_n}{\sqrt{n^{1-r-2a}}} \liml \Gamma(a+1) \sqrt{\Sigma^\prime}\cN\big(0, \sigma_r^2\big)
\end{equation}
where $\Sigma^\prime$ is independent of the Gaussian $\cN\big(0, \sigma_r^2\big)$ random variable
and $\Sigma^\prime$ shares the same distribution as $\Sigma$. Therefore,
as $M_n=a_n S_n$, we obtain from \eqref{LIMAN} that \eqref{PANDR3} reduces to
\begin{equation}
\label{PANDR4}
\frac{S_n}{\sqrt{n^{1-r}}} \liml \sqrt{\Sigma^\prime}\cN\big(0, \sigma_r^2\big).
\end{equation}
Finally, we find \eqref{ANDR} from \eqref{CVGIPMNDR}, \eqref{PANDR2} together with the almost sure convergence \eqref{ASCVGML} and Slutsky's lemma, 
which achieves the proof of Theorem
\ref{T-AN-DR}.
\demend


\vspace{-2ex}
\section*{Appendix B \\ Proofs in the critical regime}
\renewcommand{\thesection}{\Alph{section}}
\renewcommand{\theequation}{\thesection.\arabic{equation}}
\setcounter{section}{2}
\setcounter{equation}{0}
\setcounter{subsection}{0}
\label{S-B}


\subsection{Almost sure convergence.}
We carry on with the proof of the almost sure convergence in the critical regime where $2a=1-r$. 

\ \vspace{-1ex}\\
\noindent{\bfseries Proof of Theorem \ref{T-ASCVG-CR}.}
We obtain from \eqref{CALCIPM} and \eqref{CVGVNCR} that
$$
\langle M \rangle_n = O(\log n) \hspace{1cm} \text{a.s.} 
$$
Then, it follows from Theorem
1.3.24 in \cite{Duflo1997} that $M_n^2= O( \log n \log \log n )$ a.s. 
Consequently, as $M_n = a_nS_n$, we deduce from \eqref{LIMAN} that
$S_n^2=O(n^{1-r} \log n \log \log n)$ a.s. 
which clearly implies that
\begin{equation*}
\lim_{n \rightarrow \infty} \frac{S_n}{n}
= 0  \hspace{1cm} \text{a.s.}
\end{equation*}
As in the diffusive regime, we also obtain that
\begin{equation}
\label{PRATECR}
\lim_{n \rightarrow \infty} n^{r}\Big(\frac{S_n}{n}\Big)^2 = 0 \hspace{1cm}\text{a.s.}
\end{equation}
which will be useful in Section B.2.
\demend

\vspace{-2ex}
\subsection{Law of iterated logarithm.} 
\ \vspace{1ex}\\
\noindent{\bfseries Proof of Theorem \ref{T-LIL-CR}.}
The proof follows the same lines as that of Theorem \ref{T-LIL-DR}. We already saw from \eqref{CVGVNCR} that
\begin{equation}
\label{PLILCR1}
\lim_{n \rightarrow \infty} \frac{1}{\log n}V_n
= (\Gamma(a+1))^2 \Sigma  \hspace{1cm} \text{a.s.}
\end{equation}
Moreover, we have from \eqref{PRATECR} together with Toeplitz's lemma that
\begin{equation}
\label{PLILCR2}
\lim_{n \rightarrow \infty} \frac{1}{\log n}W_n
=\lim_{n \rightarrow \infty} \frac{(\Gamma(a+1))^2}{\log n}
\sum_{k=1}^{n-1} \frac{1}{k} k^{r}\Big(\frac{S_k}{k}\Big)^2
= 0  \hspace{1cm} \text{a.s.}
\end{equation}
Consequently, we obtain from \eqref{CALCIPM}, \eqref{PLILCR1} and \eqref{PLILCR2} that
\begin{equation}
\label{PLILCR3}
\lim_{n \rightarrow \infty} \frac{1}{\log n}\langle M \rangle_n
= (\Gamma(a+1))^2 b \Sigma  \hspace{1cm} \text{a.s.}
\end{equation}
In addition, as $2a=b$, we deduce from \eqref{LIMAN} and \eqref{BSUMDR} that
\begin{equation*}
\sum_{n=2}^\infty \frac{1}{(\log n)^2} \dE[|\Delta M_n|^4] 
\leq 
\frac{2}{\Gamma(b)}\sum_{n=1}^\infty \frac{1}{(\log n)^2} \frac{a_{n}^4}{n^{1-b}} <\infty.
\end{equation*}
Furthermore, let $(Q_n)$ be the martingale defined, for all $n \geq 1$, by
$$
Q_n=\sum_{k=2}^n \frac{a_k^2}{\log k} (\varepsilon_k^2 - \dE[\varepsilon_k^2 | \cF_{k-1}]).
$$
Its predictable quadratic variation satisfies
$$
\langle Q \rangle_n 
\leq 2b\sum_{k=1}^n \frac{a_{k}^4}{k(\log k)^2}\Sigma_k.
$$
Hence, we deduce from \eqref{ASCVGML} and \eqref{LIMAN} that
$\langle Q \rangle_n$ converges a.s. to a finite random variable which ensures that $(Q_n)$ 
converges a.s. to a finite random variable.
As in the diffusive regime, all the conditions of Theorem 1 and Corollary 2 in \cite{Heyde1977} are satisfied, which leads to the
law of iterated logarithm
\begin{equation}
 \limsup_{n \rightarrow \infty} \frac{M_n}{\sqrt{2 \langle M \rangle_n  \log \log \langle M \rangle_n }}   = 
 -\liminf_{n \rightarrow \infty} \frac{M_n}{\sqrt{2 \langle M \rangle_n  \log \log \langle M \rangle_n }} 
 = 1\hspace{0.5cm} \text{a.s.}
 \label{PLILCR4}
\end{equation}
Finally, as $M_n=a_n S_n$, the law of iterated logarithm \eqref{LIL-CR1} follows
from the almost sure convergence \eqref{ASCVGML} together with \eqref{LIMAN}, \eqref{PLILCR3} 
and \eqref{PLILCR4}.
We also obtain \eqref{LIL-CR3} from \eqref{ASCVGML} 
and \eqref{LIL-CR1}, which achieves the proof of
Theorem \ref{T-LIL-CR}. \demend

\vspace{-2ex}
\subsection{Asymptotic normality.}
\ \vspace{1ex}\\
\noindent{\bfseries Proof of Theorem \ref{T-AN-CR}.}
Via the same lines as in the proof of Theorem \ref{T-AN-DR}, we obtain the asymptotic normality
\begin{equation}
\label{PANCR1}
\frac{M_n}{\sqrt{\langle M \rangle_n}} \liml \cN(0, 1).
\end{equation}
In addition, we also deduce from Theorem 1 in \cite{Heyde1977} that
\begin{equation}
\label{PANCR2}
\frac{M_n}{\sqrt{ \log n}} \liml \Gamma(a+1) \sqrt{(1-r) \Sigma^\prime}\cN(0, 1)
\end{equation}
where $\Sigma^\prime$ is independent of the Gaussian $\cN(0,1)$ random variable
and $\Sigma^\prime$ shares the same distribution as $\Sigma$. Hence, we deduce \eqref{ANCR}
and \eqref{ANCRML} from \eqref{ASCVGML} , \eqref{PANCR1}, \eqref{PANCR2} and Slutsky's lemma, which completes the proof of Theorem
\ref{T-AN-CR}.
\demend


\vspace{-2ex}
\section*{Appendix C \\ Proofs in the superdiffusive regime}
\renewcommand{\thesection}{\Alph{section}}
\renewcommand{\theequation}{\thesection.\arabic{equation}}
\setcounter{section}{3}
\setcounter{equation}{0}
\setcounter{subsection}{0}
\label{S-C}

\noindent
In order to carry out the proofs in the superdiffusive regime where $2a>1-r$, it is necessary to show
that the martingale $(M_n)$ is bounded in $\dL^m$ for any integer $m\geq 1$. Denote by
$[M]_n$ the quadratic variation associated with $(M_n)$, given by
$[M]_0=0$ and, for all $n \geq 1$,
\begin{equation} 
\label{QVM}
[M]_n = \sum_{k=1}^n a_k^2\varepsilon_k^2.
\end{equation}
For all $n\geq 1$, the martingale increments are such that $\varepsilon_n^2 \leq 4$,
which implies that
$$
[M]_n \leq 4\sum_{k=1}^n a_k^2.
$$
Consequently, as soon as $2a>1$, we have for any integer $m \geq 1$,
$$
\sup_{n \geq 1} \dE\big[[M]_n^m\bigr] \leq 4^m \Big( 
\sum_{k=0}^{\infty}  \Bigl( \frac{\Gamma(k+1) \Gamma(a+1) }{\Gamma(k+a+1)} \Bigr)^2 \Big)^m
 <\infty.
$$
Therefore, it follows from the Burholder-Davis-Gundy inequality given e.g. by Theorem 2.10 in \cite{Hall1980} that,  for any real number $m>1$, there exists a positive constant $C_m$ such that
$$
\sup_{n \geq 1} \dE\big[|M_n|^m\bigr]  \leq C_m \sup_{n \geq 1} \dE\big[[M]_n^{m/2}\bigr] 
 <\infty.
$$
It is much more difficult to show this result under the only hypothesis $2a>1-r$.

\begin{lem}
\label{L-MARTM}
In the superdiffusive regime, we have for any integer $m\geq 1$,
\begin{equation}
\label{SUPQVM}
\sup_{n \geq 1} \dE\big[[M]_n^m\bigr]<\infty.
\end{equation}
Consequently, the martingale $(M_n)$ converges almost surely and in $\dL^m$ to a finite random variable M.
\end{lem}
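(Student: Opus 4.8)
The plan is to reduce the bound on the moments of the quadratic variation $[M]_n$ to a problem about the number of ones $\Sigma_n$, which has already been controlled in Lemma \ref{L-MARTN}. The starting point is the crude pointwise bound $\varepsilon_n^2 \leq 4 X_n^2$, which holds because the increment $\varepsilon_n = S_n - \alpha_{n-1}S_{n-1} = X_n - \frac{a}{n-1}S_{n-1}$ vanishes whenever the elephant stays put, i.e. whenever $X_n=0$, and is otherwise uniformly bounded. Since $X_n^2 \in \{0,1\}$, this gives
\begin{equation*}
[M]_n = \sum_{k=1}^n a_k^2 \varepsilon_k^2 \leq 4 \sum_{k=1}^n a_k^2 X_k^2 \hspace{1cm}\text{a.s.}
\end{equation*}
In the superdiffusive regime we have $2a > 1-r = b$, and the asymptotic behavior $n^a a_n \to \Gamma(a+1)$ from \eqref{LIMAN} shows $a_k^2 \sim (\Gamma(a+1))^2 k^{-2a}$. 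The key observation is that $2a > b$ forces the weights $a_k^2$ to decay fast enough that the growth of $\Sigma_k \sim k^b$ is absorbed.

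The main step is to establish \eqref{SUPQVM} by controlling $\dE\bigl[\bigl(\sum_{k=1}^n a_k^2 X_k^2\bigr)^m\bigr]$ uniformly in $n$. First I would expand the $m$-th power and use the monotonicity of the partial sums to compare against $\sum_k a_k^2 \Sigma_k$-type quantities; more efficiently, I would invoke an Abel summation or summation-by-parts argument to rewrite $\sum_{k=1}^n a_k^2 X_k^2$ in terms of $\Sigma_k = \sum_{j\leq k} X_j^2$ and the differences $a_k^2 - a_{k+1}^2$. Since $(a_k^2)$ is decreasing with $a_k^2 - a_{k+1}^2 = O(k^{-2a-1})$, this yields a bound of the form
\begin{equation*}
\sum_{k=1}^n a_k^2 X_k^2 \leq C \sum_{k=1}^n \frac{\Sigma_k}{k^{2a+1}} + a_n^2 \Sigma_n
\end{equation*}
up to harmless constants. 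Taking the $m$-th moment and applying the triangle inequality in $\dL^m$, everything reduces to bounding $\dE[\Sigma_k^m]^{1/m}$. By Lemma \ref{L-MARTN}, or rather directly from \eqref{NP6}, we have $\dE[\Sigma_k^m] \leq \dE[\Sigma_k^{(m)}] = m! \, b_k(m) \leq C_m k^{mb}$, so $\dE[\Sigma_k^m]^{1/m} \leq C k^b$. Substituting gives a convergent series precisely because $2a+1 - b > 1$, i.e. $2a > b$, which is exactly the superdiffusive condition. The boundary term $a_n^2 \dE[\Sigma_n^m]^{1/m} = O(n^{-2a}\cdot n^{b}) = O(n^{b-2a})$ tends to zero. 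Hence $\sup_n \dE\bigl[[M]_n^m\bigr]^{1/m} < \infty$, which is \eqref{SUPQVM}.

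Once \eqref{SUPQVM} is in hand, the conclusion follows by the same route sketched in the excerpt for the case $2a>1$: the Burkholder--Davis--Gundy inequality (Theorem 2.10 in \cite{Hall1980}) gives, for every integer $m \geq 1$, a constant $C_m>0$ with $\sup_{n\geq 1}\dE[|M_n|^{2m}] \leq C_m \sup_{n\geq 1}\dE\bigl[[M]_n^{m}\bigr] < \infty$, so $(M_n)$ is bounded in $\dL^m$ for every $m$. In particular it is bounded in $\dL^2$, hence by Doob's martingale convergence theorem it converges almost surely and in $\dL^2$ to a finite random variable $M$; the uniform $\dL^m$ boundedness then upgrades the convergence to $\dL^m$ for every integer $m\geq 1$ via uniform integrability of $(|M_n|^m)$. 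I expect the main obstacle to be the Abel-summation bookkeeping that converts the martingale quadratic variation into a weighted sum of $\Sigma_k$ and the verification that the exponent inequality $2a+1-b>1$ is sharp; the probabilistic moment input is entirely supplied by the Pochhammer computation \eqref{NP6} already established for $\Sigma_n$.
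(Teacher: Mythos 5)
Your argument has a genuine gap at its very first step: the claimed pointwise bound $\varepsilon_n^2 \leq 4X_n^2$ is false. From \eqref{NEWDEFMN}, $\varepsilon_n = S_n - \alpha_{n-1}S_{n-1} = X_n - \frac{a}{n-1}S_{n-1}$, so on the event $\{X_n=0\}$ the increment does \emph{not} vanish: it equals $-\frac{a}{n-1}S_{n-1}$, which is nonzero whenever $S_{n-1}\neq 0$ (and $a>0$ in the superdiffusive regime). A martingale increment is the step minus its conditional mean, and that compensator term survives even when the elephant stays put. Everything downstream --- the Abel summation reducing $[M]_n$ to the $\Sigma_k$, the Minkowski bound, the exponent count $2a+1-b>1$ --- rests on this false domination, so the proof as written does not establish \eqref{SUPQVM}. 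This is exactly the difficulty the paper has to work around: it only uses the crude bound $\varepsilon_n^2\leq 4$ together with the \emph{conditional} bound $\dE[\varepsilon_{n+1}^2|\cF_n]\leq b\Sigma_n/n$ from \eqref{CM2EPS}, in which the compensator enters with a negative sign and can be discarded, and it then controls the correlation between $\Sigma_n$ and powers of $[M]_n$ by an induction on Pochhammer moments, proving $\dE\big[\Sigma_n[M]_n^{(k)}\big]\leq c_k/b_n$.

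That said, your reduction strategy can be repaired, and the repair keeps it genuinely different from (and lighter than) the paper's induction. Write $\varepsilon_k^2 \leq 2X_k^2 + 2a^2\big(\frac{S_{k-1}}{k-1}\big)^2$ and use the pointwise inequality $|S_{k-1}|\leq \Sigma_{k-1}$ (valid since $|X_j|=X_j^2$), so that
\begin{equation*}
[M]_n \;\leq\; 2\sum_{k=1}^n a_k^2X_k^2 \;+\; 2a^2\sum_{k=2}^n a_k^2\Big(\frac{\Sigma_{k-1}}{k-1}\Big)^2 .
\end{equation*}
The first sum is handled exactly by your Abel summation and Minkowski argument, using $\dE[\Sigma_k^m]^{1/m}\leq C_m k^b$ from \eqref{NP6}, the series converging because $2a+1-b>1$. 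For the second sum, Minkowski together with $\|\Sigma_{k-1}^2\|_m=\|\Sigma_{k-1}\|_{2m}^2\leq C_m k^{2b}$ gives terms of order $k^{2b-2a-2}$, and this series converges as well, since $2a>b$ and $b\leq 1$ imply $2a+2-2b>2-b\geq 1$. With this correction, your route (pointwise reduction to $\Sigma_k$, Minkowski, then BDG and Doob as you state them, which are fine) becomes a valid and arguably more elementary alternative to the paper's Pochhammer-moment induction; without it, the key step fails.
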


\begin{proof}
We shall prove \eqref{SUPQVM} by induction on $m \geq 1 $ and by the calculation of
$$
\dE[[M]_n^{(m)}]=\dE[[M]_n([M]_n+1)\cdots([M]_n+m-1)].
$$
For $m=1$, we have from \eqref{CM2EPS}, \eqref{MOM1SIGMA} and \eqref{QVM} that
\begin{equation}
\label{MP1}
\dE[[M]_n]\leq  1+b \sum_{k=1}^{n-1} \frac{a_{k}^2}{k}\dE[\Sigma_{k}]
\leq 1+b \sum_{k=1}^{n-1} \frac{a_{k}^2}{kb_k}
\leq 1+b v_{n}.
\end{equation}
Then, we immediately deduce from \eqref{LIMVNSR} and \eqref{MP1} that
\begin{equation*}
\sup_{n \geq 1} \dE\big[[M]_n\bigr]<\infty.
\end{equation*}
We also claim that 
\begin{equation}
\label{MP2}
\sup_{n \geq 1} \dE\big[N_n [M]_n\big]<\infty.
\end{equation}
As a matter of fact, it follows from \eqref{CM12X} and \eqref{CM2EPS} that for all $n\geq 1$,
\begin{equation}
\dE[\Sigma_{n+1} [M]_{n+1} |\cF_n] \leq \Big(1+ \frac{b}{n} \Big)\Sigma_n [M]_n
+\frac{b a_{n+1}^2}{n} \Sigma_n + \frac{b a_{n+1}^2}{n}\Sigma_n^2  \hspace{1cm} \text{a.s.}
\label{MP3}
\end{equation}
Hence, by taking the expectation on both sides of \eqref{MP3}, we obtain that for all $n \geq 1$,
$$
\dE[\Sigma_{n+1} [M]_{n+1}]\leq \Big(1+ \frac{b}{n} \Big)\dE[\Sigma_n [M]_n]
+\frac{b a_{n+1}^2 }{n} \dE[\Sigma_n] + \frac{b a_{n+1}^2}{n}\dE[\Sigma_n^2].
$$
However, we find from \eqref{NP6} with $m=2$ that $\dE[\Sigma_n^2]=2b_n(2) -\dE[\Sigma_n]$ where 
\begin{equation*}
b_n(2)=\frac{\Gamma(n+2b)}{\Gamma(n) \Gamma(1+2b)}.
\end{equation*}
It ensures that
\begin{eqnarray*}
\dE[\Sigma_n [M]_n] & \leq &\frac{\Gamma(n+b)}{\Gamma(n)\Gamma(b +1)}\Big(1+ 2b 
\sum_{k=1}^{n-1} a_{k+1}^2 b_{k+1}  \frac{b_k(2)}{k}\Big), \\
 & \leq &\frac{\Gamma(n+b)}{\Gamma(n)\Gamma(b +1)}\Big(1+ 2b 
\sum_{k=1}^{n-1} a_{k+1}^2 b_{k+1}  \frac{b_{k+1}(2)}{ k+2b}\Big), \\
& \leq &\frac{\Gamma(n+b)}{\Gamma(n)\Gamma(b +1)}\Big(2b\sum_{k=1}^n a_{k}^2 b_{k} \frac{b_{k}(2)}{k+2b-1}\Big), \\
& \leq &\frac{2\Gamma(n+b)}{\Gamma(n)\Gamma(b +1)}\sum_{k=1}^n a_{k}^2 b_{k} \frac{b_{k}(2)}{k}, \\
& \leq &\frac{2\Gamma(n+b)}{\Gamma(n)\Gamma(b +1)}\sum_{k=1}^n \frac{a_{k}^2}{k b_k}, 
\end{eqnarray*}
thanks to \eqref{NP7} with $m=2$. Consequently, we deduce from \eqref{LIMVNSR} that there exists a constant $c_1>0$ such that for all
$n \geq 1$,
\begin{equation}
\label{INDMS}
\dE[\Sigma_n [M]_n] \leq \frac{c_1}{b_n}
\end{equation}
which clearly leads to \eqref{MP2} as $N_n=b_n \Sigma_n$.
From now on, assume that $m \geq 2$ and that for all $0 \leq k \leq m-1$, there exists a constant $c_k>0$ such that for all
$n \geq 1$,
\begin{equation}
\label{INDM}
\dE[\Sigma_n [M]_n^{(k)}] \leq \frac{c_k}{b_n}.
\end{equation}
It follows from \eqref{QVM} that 
\begin{equation}
\label{MP4}
[M]_{n+1}^{(m)}=([M]_{n}+a_{n+1}^2\varepsilon_{n+1}^2)^{(m)}=\sum_{k=0}^m
\begin{pmatrix}
m \\ k
\end{pmatrix}
[M]_n^{(k)}(a_{n+1}^2\varepsilon_{n+1}^2)^{(m-k)}.
\end{equation}
By taking the conditional expectation on both sides of \eqref{MP4}, we obtain that
\begin{equation}
\label{MP5}
\dE[[M]_{n+1}^{(m)}| \cF_n]=\sum_{k=0}^m
\begin{pmatrix}
m \\ k
\end{pmatrix}
[M]_n^{(k)}\dE[ (a_{n+1}^2\varepsilon_{n+1}^2)^{(m-k)}| \cF_n].
\end{equation}
However, as $a_n^2 \leq 1$ and $\varepsilon_n^2 \leq 4$, we find from \eqref{CM2EPS} that
for all $k\geq 1$,
\begin{equation}
\label{MP6}
\dE[ (a_{n+1}^2\varepsilon_{n+1}^2)^{(k)}| \cF_n] \leq 4^{(k)} a_{n+1}^2 \dE[\varepsilon_{n+1}^2| \cF_n]
\leq \frac{4^{(k)} b a_{n+1}^2}{n}\Sigma_n \hspace{1cm} \text{a.s.}
\end{equation}
Consequently, we deduce from \eqref{MP5} and \eqref{MP6} that 
\begin{equation*}
\dE[[M]_{n+1}^{(m)}| \cF_n]\leq [M]_n^{(m)}+\frac{b a_{n+1}^2}{n}
\sum_{k=0}^{m-1}
\begin{pmatrix}
m \\ k
\end{pmatrix} 4^{(m-k)}   \Sigma_n [M]_n^{(k)}\hspace{1cm} \text{a.s.} 
\end{equation*}
which leads via \eqref{INDM} to
\begin{eqnarray}
\dE[[M]_{n+1}^{(m)}]| &\leq& \dE[[M]_n^{(m)}]+\frac{b a_{n+1}^2}{n} \sum_{k=0}^{m-1}
\begin{pmatrix}
m \\ k
\end{pmatrix} 4^{(m-k)} \dE[\Sigma_n[M]_n^{(k)}], \notag\\
&\leq& \dE[[M]_n^{(m)}]+\frac{b a_{n+1}^2}{nb_n} \sum_{k=0}^{m-1}
\begin{pmatrix}
m \\ k
\end{pmatrix} 4^{(m-k)} c_k.
\label{MP7}
\end{eqnarray}
Hereafter, denote
$$
d_m=\sum_{k=0}^{m-1}
\begin{pmatrix}
m \\ k
\end{pmatrix} 4^{(m-k)} c_k.
$$
We clearly have from \eqref{MP7} that 
\begin{equation}
\label{MP8}
\dE[[M]_n^{(m)}]\leq  1+b d_m\sum_{k=1}^{n-1}\frac{a_{k}^2}{kb_k}
\leq 1+b d_mv_{n}.
\end{equation}
Hence, we obtain from \eqref{LIMVNSR} and \eqref{MP8} that
\begin{equation}
\label{MP9}
\sup_{n \geq 1} \dE\big[[M]_n^{(m)}\bigr]<\infty.
\end{equation}
The proof that there exists a constant $c_m>0$ such that for all
$n \geq 1$,
\begin{equation*}
\dE[\Sigma_n [M]_n^{(m)}] \leq \frac{c_m}{b_n}
\end{equation*}
is left to the reader inasmuch as it follows essentially the same lines as that of \eqref{INDMS}.
Consequently, we also find that
\begin{equation*}
\sup_{n \geq 1} \dE\big[N_n[M]_n^{(m)}\bigr]<\infty.
\end{equation*}
We immediately deduce from \eqref{MP9} that for any integer $m\geq 1$,
\begin{equation}
\label{MP10}
\sup_{n \geq 1} \dE\big[[M]_n^m\bigr] \leq \sup_{n \geq 1} \dE\big[[M]_n^{(m)}\bigr] <\infty.
\end{equation}
Finally, we obtain from \eqref{MP10} together with the Burholder-Davis-Gundy inequality that
the martingale $(M_n)$ is bounded in $\dL^m$ for any integer $m\geq 1$. We can conclude that $(M_n)$ converges almost surely and
in $\dL^m$ to a finite random variable $M$, which achieves the proof of Lemma \ref{L-MARTM}. 
\end{proof}

\ \vspace{-1ex}\\
\noindent{\bfseries Proof of Theorem \ref{T-ASCVG-SR}.}
We are now in position to prove the almost sure convergence \eqref{ASCVGSR1}. It was just shown in
Lemma \ref{L-MARTM} that the martingale $(M_n)$ converges almost surely to a finite random variable 
$M$. Consequently, as $M_n=a_n S_n$,
we immediately deduce from \eqref{LIMAN} with $a=2p+r-1$, that
\begin{equation*}
\lim_{n \rightarrow \infty} \frac{S_n}{n^{2p+r-1}}
= L  \hspace{1cm} \text{a.s.}
\end{equation*}
where the limiting random variable $L$ is given by
\begin{equation}
\label{DEFL}
L=\frac{1}{\Gamma(2p+r)}M.
\end{equation}
Moreover, it also follows from Lemma \ref{L-MARTM} that for any integer $m\geq 1$,
\begin{equation}
\label{PLMCVGMSR}
\lim_{n \rightarrow \infty} \dE[ |M_n -M|^m]
= 0.
\end{equation}
Dividing both sides of \eqref{PLMCVGMSR} by $\Gamma^m(2p+r)$, we obtain from \eqref{LIMAN} 
and \eqref{DEFL} that for any integer $m\geq 1$,
\begin{equation*}
\label{LMCVG}
\lim_{n \rightarrow \infty} \dE\Bigl[ \Bigl|\frac{S_n}{n^{2p+r-1}} -L\Bigr|^m\Bigr]
= 0
\end{equation*}
which is exactly what we wanted to prove.
\demend

\ \vspace{-1ex}\\
\noindent{\bfseries Proof of Theorem \ref{T-MOM-SR}.}
Hereafter, we are going to compute the first four moments of the random variable $L$ where we recall
that $a=2p+r-1$ and $b=1-r$.
We have from \eqref{CM1SUM} that for all $n \geq 1$,
$$
\dE[S_{n+1}]=\Big(1+ \frac{a}{n} \Big)\dE[S_n]
$$
which leads to
\begin{equation}
\label{PML1}
\dE[S_n]=\prod_{k=1}^{n-1} \Big(1+ \frac{a}{k} \Big)\dE[S_1] =\frac{2s-1}{a_n}.
\end{equation}
Hence, we immediately get from \eqref{PML1} that
$$
\dE[L]  = \lim_{n \rightarrow \infty} \frac{\dE[a_nS_n]}{\Gamma(a+1)}  =
\frac{2s-1}{a\Gamma(a)}
=\frac{2s-1}{(2p+r-1)\Gamma(2p+r-1)}.
$$
In addition, it follows from \eqref{MOM2SN} that
$$
\dE[L^2]  = \lim_{n \rightarrow \infty} \frac{\dE[a_n^2S_n^2]}{\Gamma^2(a+1)}  =
\frac{1}{(2a-b)\Gamma(2a)}=\frac{1}{(4p+3(r-1))\Gamma(2(2p+r-1))}.
$$
Moreover, we obtain from \eqref{CM3SUM} that for all $n \geq 1$,
\begin{equation}
\label{PML2}
\dE[S_{n+1}^3] =
\Big(1+\frac{3a}{n} \Big) \dE[S_n^3]+\frac{3b}{n}\dE[S_n\Sigma_n] +\frac{a}{n} \dE[S_n].
\end{equation}
However, one can easily check that for all $n \geq 1$,
\begin{eqnarray}
\dE[S_n\Sigma_n] & = & \frac{(2s-1)\Gamma(n+a+b)}{\Gamma(n)\Gamma(a+b+1)}\Big(1+
\frac{\Gamma(a+b+1)}{\Gamma(a)}\sum_{k=1}^{n-1} \frac{\Gamma(k+a)}{\Gamma(k+1+a+b)}\Big), \notag \\
& = &
\frac{(2s-1)\Gamma(n+a+b)}{\Gamma(n)\Gamma(a)}
\sum_{k=1}^n \frac{\Gamma(k+a-1)}{\Gamma(k+a+b)}, \notag \\
&=& \frac{(2s-1)\Gamma(n+a+b)}{b\Gamma(n)\Gamma(a)}
\Big( \frac{\Gamma(a)}{\Gamma(a+b)} -\frac{\Gamma(n+a)}{\Gamma(n+a+b)}\Big), \notag \\
&=& \frac{(2s-1)}{b}\Big(\frac{\Gamma(n+a+b)}{\Gamma(n)\Gamma(a+b)} - \frac{a}{a_n} \Big).
\label{PML3}
\end{eqnarray}
Hence, we deduce from \eqref{PML1}, \eqref{PML2} and \eqref{PML3}  that for all $n \geq 1$,
\begin{equation*}
\dE[S_{n+1}^3] =
\Big(1+\frac{3a}{n} \Big) \dE[S_n^3]+(2s-1) \Big(
\frac{3\Gamma(n+a+b)}{\Gamma(n+1)\Gamma(a+b)} -\frac{2\Gamma(n+a)}{\Gamma(n+1)\Gamma(a)} \Big)
\end{equation*}
which leads to
\begin{equation}
\label{PML4}
\dE[S_n^3]  = \frac{(2s-1)\Gamma(n+3a)}{\Gamma(n)\Gamma(3a+1)}\Big(1+
\frac{3\Gamma(3a+1)}{\Gamma(a+b)}\xi_n
-\frac{2\Gamma(3a+1)}{\Gamma(a)}\zeta_n\Big)
\end{equation}
where
\begin{eqnarray*}
\xi_n&=&\sum_{k=1}^{n-1} \frac{\Gamma(k+a+b)}{\Gamma(k+1+3a)} =
\sum_{k=1}^n \frac{\Gamma(k+a+b-1)}{\Gamma(k+3a)} -\frac{\Gamma(a+b)}{\Gamma(3a+1)},\\
\zeta_n&=&\sum_{k=1}^{n-1} \frac{\Gamma(k+a)}{\Gamma(k+1+3a)} =
\sum_{k=1}^n \frac{\Gamma(k+a-1)}{\Gamma(k+3a)} -\frac{\Gamma(a)}{\Gamma(3a+1)}.
\end{eqnarray*}
However, we find from Lemma B.1 in \cite{Bercu2018} that
$$
\sum_{k=1}^n \frac{\Gamma(k+a+b-1)}{\Gamma(k+3a)}=\frac{1}{2a-b}
\Big( \frac{\Gamma(a+b)}{\Gamma(3a)} -\frac{\Gamma(n+a+b)}{\Gamma(n+3a)}\Big),
$$
$$
\sum_{k=1}^n \frac{\Gamma(k+a-1)}{\Gamma(k+3a)}=\frac{1}{2a}
\Big( \frac{\Gamma(a)}{\Gamma(3a)} -\frac{\Gamma(n+a)}{\Gamma(n+3a)}\Big).
$$
Consequently, we obtain from \eqref{PML3} that for all $n \geq 1$,
\begin{equation}
\label{PML5}
\dE[S_n^3]  = \frac{(2s-1)}{\Gamma(n)}\Big(
\frac{(a+b)\Gamma(n+3a)}{a(2a-b)\Gamma(3a)}-
\frac{3\Gamma(n+a+b)}{(2a-b)\Gamma(a+b)}
+\frac{\Gamma(n+a)}{\Gamma(a+1)}\Big).
\end{equation}
Therefore, it follows from \eqref{PML5} that
\begin{eqnarray*}
\dE[L^3]  &=& \lim_{n \rightarrow \infty} \frac{\dE[a_n^3S_n^3]}{\Gamma^3(a+1)}  =
\frac{(2s-1)(a+b)}{a(2a-b)\Gamma(3a)}, \\
&=& \frac{2p(2s-1)}{(2p+r-1)(4p+3(r-1))\Gamma(3(2p+r-1))}.
\end{eqnarray*}
By the same token, we deduce from \eqref{CM4SUM} that for all $n \geq 1$,
\begin{equation}
\label{PML6}
\dE[S_{n+1}^4] =
\Big(1+\frac{4a}{n} \Big) \dE[S_n^4]+\frac{6b}{n}\dE[S_n^2\Sigma_n] +\frac{4a}{n} \dE[S_n^2] +\frac{b}{n}
\dE[\Sigma_n].
\end{equation}
Via the same lines as in the proof of \eqref{PML3}, we obtain that for all $n\geq 1$,
\begin{equation}
\label{PML7}
\dE[S_n^2\Sigma_n]=\frac{2a\Gamma(n+c)}{b(2a-b)\Gamma(n)\Gamma(c)}
-\frac{1}{(2a-b)}\Big(\frac{2a\Gamma(n+2a)}{b\Gamma(n)\Gamma(2a)}+
\frac{\Gamma(n+2b)}{\Gamma(n)\Gamma(2b)}-\frac{b}{b_n}\Big)
\end{equation}
where $c=2a+b$. Hence, it follows from \eqref{MOM2SN}, \eqref{PML6} and \eqref{PML7}
together with tedious but straighforward calculations that for all $n\geq 1$,
\begin{equation}
\label{PML8}
\dE[S_n^4]  = \frac{\Gamma(n+4a)}{(2a-b)\Gamma(n)}\Big(
\frac{12aP_n}{\Gamma(2a+b)} -\frac{8aQ_n}{\Gamma(2a)}
-\frac{6bR_n}{\Gamma(2b)} +\frac{(5b-2a)T_n}{\Gamma(b)}\Big)
\end{equation}
where
\begin{align*}
P_n&=\sum_{k=1}^n \frac{\Gamma(k+c-1)}{\Gamma(k+4a)}=\frac{1}{2a-b}\Big(\frac{\Gamma(2a+b)}{\Gamma(4a)}-
\frac{\Gamma(n+2a+b)}{\Gamma(n+4a)}\Big), \\
Q_n&=\sum_{k=1}^n \frac{\Gamma(k+2a-1)}{\Gamma(k+4a)}=\frac{1}{2a}\Big(\frac{\Gamma(2a)}{\Gamma(4a)}-
\frac{\Gamma(n+2a)}{\Gamma(n+4a)}\Big), \\
R_n&=\sum_{k=1}^n \frac{\Gamma(k+2b-1)}{\Gamma(k+4a)}=\frac{1}{2(2a-b)}\Big(\frac{\Gamma(2b)}{\Gamma(4a)}-
\frac{\Gamma(n+2b)}{\Gamma(n+4a)}\Big), \\
T_n&=\sum_{k=1}^n \frac{\Gamma(k+b-1)}{\Gamma(k+4a)}=\frac{1}{4a-b}\Big(\frac{\Gamma(b)}{\Gamma(4a)}-
\frac{\Gamma(n+b)}{\Gamma(n+4a)}\Big).
\end{align*}
Finally, we find from \eqref{PML8} that
\begin{eqnarray*}
\dE[L^4]  &=& \lim_{n \rightarrow \infty} \frac{\dE[a_n^4S_n^4]}{\Gamma^4(a+1)}  =
\frac{3\big((4a-b)^2-3(2a-b)^2\big)}{(4a-b) (2a-b)^2\Gamma(4a)}, \\
&=& \frac{6(2a^2+2ab-b^2)}{(4a-b)(2a-b)^2\Gamma(4a)}, \\
&=& \frac{6\big(8p^2-4p(1-r)-(1-r)^2\big)}{(8p+5(r-1))(4p+3(r-1))^2\Gamma(4(2p+r-1))}
\end{eqnarray*}
which completes the proof of Theorem \ref{T-MOM-SR}.
\demend

\vspace{-2ex}
\subsection{Asymptotic normality.}
\ \vspace{1ex}\\
\noindent{\bfseries Proof of Theorem \ref{T-AN-SR}.}
We are going to prove that the fluctuation of the ERWS around its limiting random variable $L$ is still Gaussian.
In contrast with the diffusive and critical regimes, the proof of the asymptotic normality for the martingale $(M_n)$
in the superdiffusive regime relies on the second part of Theorem 1 and Corollaries 1 and 2 in \cite{Heyde1977}.
Denote
$$
\Lambda_n=\sum_{k=n}^{\infty}\dE\big[\Delta M_k^2|\cF_{k-1}].
$$
It follows from \eqref{CM2EPS} that for all $n\geq 2$,
\begin{equation}
\label{PANSR1}
\Lambda_n=\sum_{k=n}^{\infty}a_k^2\dE\big[\varepsilon^2_k|\cF_{k-1}]
=b\sum_{k=n}^{\infty}a_{k}^2 \frac{\Sigma_{k-1}}{k\!-\!1}-a^2 \sum_{k=n}^{\infty} a_k^2\Big(\frac{S_{k-1}}{k\!-\!1}\Big)^2.
\end{equation}
On the one hand, we have from the almost sure convergence \eqref{ASCVGML} that
\begin{equation}
\label{PANSR2}
\lim_{n \rightarrow \infty} n^{2a+r-1}\sum_{k=n}^{\infty}a_{k}^2 \frac{\Sigma_{k-1}}{k\!-\!1}= \frac{(\Gamma(a+1))^2}{2a+r-1}  \Sigma
\hspace{1cm}\text{a.s.}
\end{equation}
On the other hand, we also obtain from the almost sure convergence \eqref{ASCVGSR1} that
\begin{equation}
\label{PANSR3}
\lim_{n \rightarrow \infty} n\sum_{k=n}^{\infty}a_{k}^2 \Big(\frac{S_{k-1}}{k\!-\!1}\Big)^2= (\Gamma(a+1))^2  L^2
\hspace{1cm}\text{a.s.}
\end{equation}
One can observe that we always have $2a+r-1<1$, which means that the second term in \eqref{PANSR1}
plays a negligible role. Consequently, we deduce from \eqref{PANSR1}, \eqref{PANSR2} and \eqref{PANSR3} that
\begin{equation}
\label{PANSR4}
\lim_{n \rightarrow \infty} n^{2a+r-1}\Lambda_n= (\Gamma(a+1))^2\tau_r^2  \Sigma
\hspace{1cm}\text{a.s.}
\end{equation}
where the asymptotic variance $\tau_r^2$ is given by \eqref{VARSR}. 
Moreover, we find from \eqref{MOMEPS5} that for any $\eta>0$,
\begin{eqnarray}
n^{2a+r-1}\sum_{k=n}^{\infty}\dE\big[\Delta M_k^2 \rI_{\{|\Delta M_k|>\eta \sqrt{n^{1-r-2a}} \}}\big]
&\leq &
\frac{1}{\eta^2}n^{2(2a+r-1)}\sum_{k=n}^{\infty}\dE\big[\Delta M_k^4\big], \notag\\
&\leq &
\frac{2}{\eta^2\Gamma(b)} n^{2(2a+r-1)}\sum_{k=n}^{\infty} \frac{a_k^4}{k^{r}}.
\label{PANSR5}
\end{eqnarray}
However, one can easily see that
\begin{equation*}
\lim_{n \rightarrow \infty} n^{4a+r-1}\sum_{k=n}^{\infty} \frac{a_k^4}{k^{r}}=\frac{(\Gamma(a+1))^4}{4a+r-1}.
\end{equation*}
Hence, we obtain from the upper bound in \eqref{PANSR5} that for any $\eta>0$,
\begin{equation*}
\lim_{n \rightarrow \infty} n^{2a+r-1}\sum_{k=n}^{\infty}\dE\big[\Delta M_k^2 \rI_{\{|\Delta M_k|>\eta \sqrt{n^{1-r-2a}} \}}\big]=0.
\end{equation*}
Furthermore, let $(P_n)$ be the martingale defined, for all $n \geq 1$, by
$$
P_n=\sum_{k=1}^n k^{2a-b}a_k^2 (\varepsilon_k^2 - \dE[\varepsilon_k^2 | \cF_{k-1}]).
$$
Its predictable quadratic variation is given by
$$
\langle P \rangle_n = \sum_{k=1}^n k^{2(2a-b)} a_k^4
(\dE[\varepsilon_k^4| \mathcal{F}_{k-1}]- (\dE[\varepsilon_k^2 | \cF_{k-1}])^2).
$$
Therefore, we obtain from \eqref{MOMEPS4} that
\begin{equation*}
\langle P \rangle_n \leq 2b\sum_{k=1}^n k^{4a-1-2b}a_{k}^4\Sigma_k
\end{equation*}
which implies via the almost sure convergence \eqref{ASCVGML} and \eqref{LIMAN} that
$\langle P \rangle_n$ converges a.s. to a finite random variable. Then, we deduce once again from
the strong law of large numbers for martingales that $(P_n)$ converges a.s. to a finite random variable.
Finally, all the conditions of the second part of Theorem 1 and Corollaries 1 and 2 in \cite{Heyde1977} are satisfied, which leads
to the asymptotic normality
\begin{equation}
\label{PANSR6}
\frac{M_n-M}{\sqrt{\Lambda_n}} \liml \cN(0, 1).
\end{equation}
Moreover, we also deduce from Theorem 1 in \cite{Heyde1977} that
\begin{equation}
\label{PANSR7}
\sqrt{n^{2a+r-1}}(M_n-M) \liml \Gamma(a+1) \sqrt{\Sigma^\prime}\cN\big(0, \tau_r^2\big)
\end{equation}
where $\Sigma^\prime$ is independent of the Gaussian $\cN\big(0, \tau_r^2\big)$ random variable
and $\Sigma^\prime$ shares the same distribution as $\Sigma$. 
Furthermore, we recall from \eqref{DEFL} that $M$ and $L$ are tightly related by the identity
$M=\Gamma(a+1)$.
Therefore,
as $M_n=a_n S_n$, we obtain from \eqref{LIMAN} that \eqref{PANSR7} reduces to
\begin{equation*}
\label{PANSR8}
\frac{S_n -n^{a}L }{\sqrt{n^{1-r}}} 
\liml \sqrt{\Sigma^\prime}\cN\big(0,\tau_r^2\big),
\end{equation*}
which coincides with \eqref{ANSR2} as $a=2p+r-1$.
Finally, we find \eqref{ANSR1} from \eqref{PANSR4},  \eqref{PANSR6} together with the almost sure convergence \eqref{ASCVGML} 
and Slutsky's lemma, which 
completes the proof of Theorem
\ref{T-AN-SR}.
\demend

\bibliographystyle{abbrv}
\bibliography{Biblio-ERWSTOP}

\end{document}